\documentclass[12pt]{amsart}
\usepackage{amsmath, amsfonts, amssymb, amsthm,hyperref,mathtools,array}
\usepackage[T1]{fontenc}
\hypersetup{hypertex=true,
	colorlinks=true,
	linkcolor=blue,
	anchorcolor=blue,
	citecolor=blue}
\usepackage{bm}
\allowdisplaybreaks[4]
\def\ord{{\rm ord}}

\def\supp {\rm supp}

\def\u{{\bm u}}

\def\e{{\bm e}}
\def\0{{\bm 0}}
\def\i{{\bm i}}
\def\1{{\bf 1}}

\def\div{{\rm div}}
\def\diag{{\rm diag}}

\def\supp{{\rm supp}}

\def\pmod #1{\ ({\rm{mod}}\ #1)}

\theoremstyle{plain}
\newtheorem{theorem}{Theorem}[section]
\newtheorem{lemma}{Lemma}

\theoremstyle{definition}

\theoremstyle{remark}
\newtheorem{remark}{Remark}

\makeatletter
\@namedef{subjclassname@2020}{%
	\textup{2020} Mathematics Subject Classification}
\makeatother
\vspace{4mm}

\begin{document}
	
	\title[Cyclic permutations and Polynomial Values
	in multiplicative subgroups]
	{Cyclic permutations of Large Subsets with Polynomial Values
		in multiplicative subgroups of finite fields}
	\author[H.-L. Wu and H.-X. Ni]{Hai-Liang Wu and He-Xia Ni*}
	
	\address {(Hai-Liang Wu) School of Science, Nanjing University of Posts and Telecommunications, Nanjing 210023, People's Republic of China}
	\email{\tt whl.math@smail.nju.edu.cn}
	
	\address {(He-Xia Ni) Department of Applied Mathematics, Nanjing Audit University, Nanjing 211815, People's Republic of China}
	\email{\tt nihexia@yeah.net}

	\keywords{cyclic permutations, Hamilton cycles, character sums, finite fields.
		\newline \indent 2020 {\it Mathematics Subject Classification}. Primary 11T24, 11L40; Secondary 11T30.
		\newline \indent This research was supported by the National Natural Science Foundation of China (Grant Nos. 12671009 and 12371004).
		\newline \indent *Corresponding author.}
	
	\begin{abstract}
		Let $f(t)\in\mathbb{Z}[t]$ be a nonconstant polynomial with nonzero discriminant and let $k\ge2$ be an integer. Inspired by the work of Alon and Bourgain, for
		sufficiently large prime $p\equiv1\pmod{k}$, we study cyclic orderings of subsets
		$A\subseteq \mathbb{F}_p$ for which
		$	f(a_i+a_{i+1})$	is a nonzero $k$-th power for every consecutive pair. By combining mixed character-sum
		estimates, Fourier analysis on $\mathbb{F}_p$, and spectral graph methods, we establish a threshold $c(p,k,f)$ such that
		every subset $A$ with $\#A\ge c(p,k,f)$ admits such a cyclic ordering. We also give lower and upper bounds for the optimal threshold.

	\end{abstract}
	\maketitle
	
	\tableofcontents

	\section{Introduction}
	\setcounter{lemma}{0}
	\setcounter{theorem}{0}
	\setcounter{equation}{0}
	\setcounter{conjecture}{0}
	\setcounter{remark}{0}
	\setcounter{corollary}{0}

   \subsection{Notation} 
   
   Throughout this paper, $p$ denotes a prime. Let $\mathbb{F}_p$ be the finite field with $p$ elements and let $\overline{\mathbb{F}_p}$ be an algebraic closure of $\mathbb{F}_p$. Let $\mathbb{F}_p^*=\mathbb{F}_p\setminus\{0\}$ be the multiplicative group of nonzero elements over $\mathbb{F}_p$. The cyclic group of multiplicative characters of $\mathbb{F}_p$ is denoted by $\widehat{\mathbb{F}_p^*}$, and let $\chi_p$ be a generator of $\widehat{\mathbb{F}_p^*}$. For any character $\chi: \mathbb{F}_p^*\rightarrow\mathbb{C}^*$, we additionally define $\chi(0)=0$. Also, $\varepsilon$ denotes the trivial multiplicative character. For any nonempty subset $A\subseteq\mathbb{F}_p$, the characteristic function of $A$ is written as 
   $$1_A(x)=\begin{cases}
   	1 & \mbox{if}\ x\in A,\\
   	0 & \mbox{otherwise}.
   \end{cases}$$
  
  Given any $f(t)\in\mathbb{Z}[t]$, we use $\deg(f)$ to denote the degree of $f$. If $\deg(f)=n\ge1$, then the discriminant of $f$ is defined by 
  $$\Delta(f)=a_n^{2n-2}\prod_{1\le i<j\le n}\left(\alpha_j-\alpha_i\right)^2,$$
  where $a_n$ is the leading coefficient of $f$ and $\alpha_1,\alpha_2,\cdots,\alpha_n$ are all roots of $f$ in $\mathbb{C}$. 
  
  For any complex matrix $M$, the symbol $M(i,j)$ denotes the $(i,j)$-entry of $M$. Finally, $\#S$ denotes the cardinality of a set $S$. 
  
  Fix an integer $k\ge2$. Let $f$ and $g$ be functions defined on the set 
  $$\mathcal{P}_k=\left\{p: \text{$p$ is a prime and $p\equiv 1\pmod {k}$}\right\}.$$ 
  As usual, $f(p)\ll g(p)$ (or $f=O(g)$) means that there exist a positive real number $C$ and a real number $x_0$ such that 
  $$|f(p)|\le C\cdot g(p)$$
  for any prime $p\in\mathcal{P}_k$ with $p\ge x_0$. In addition, the symbol $f(p)\asymp g(p)$ means that $f(p)\ll g(p)$ and $g(p)\ll f(p)$.
	
	\subsection{Background and motivation}  A broad class of cyclic permutation problems asks whether the elements of a finite set can be ordered so that each pair of consecutive elements
	satisfies a prescribed compatibility condition. For example, the famous prime circle problem asks whether there exists a permutation 
	$$a_1,a_2,\cdots,a_n$$
	of $1,2,\cdots,n$ such that $a_i+a_{i+1}$ is prime for every $1\le i\le n$, where $n$ is a positive even integer and $a_{n+1}=a_1$. Such questions arise naturally in combinatorics, number theory, and graph theory, since an admissible cyclic permutation is equivalent to a Hamilton cycle in the graph whose edges represent compatible pairs. Let $G$ be a graph with $m$ vertices, where $m\ge3$. A Hamilton cycle in $G$ has the form 
	$$v_1,v_2,\cdots,v_m, v_1,$$
	where $v_1,v_2,\cdots,v_m$ are precisely all the vertices of $G$ and each consecutive pair (including $(v_{m},v_1)$) is adjacent. Many sufficient conditions are known for the existence of a Hamilton cycle in a graph. For example, Dirac \cite{Dirac} showed that if $\deg_G(v)\ge m/2$ for every vertex $v$, then $G$ has a Hamilton cycle, where $\deg_G(v)$ is the degree of the vertex $v$ in $G$. Ore \cite{Ore} showed that if $\deg_G(v_1)+\deg_G(v_2)\ge m$ for any two distinct nonadjacent vertices $v_1,v_2$, then $G$ has a Hamilton cycle. For further results on Hamilton cycles, see \cite{CE,GCS,KR}.

	On the other hand, additive combinatorics over finite fields is a very active area of research. Problems of this type are also closely related to polynomials, character sums, and algebraic curves over finite fields. These connections allow us to apply powerful tools from graph theory, algebraic geometry and number theory. For example, Alon and Bourgain \cite{AB} investigated the additive structures of multiplicative subgroups of finite fields. They \cite[Theorem 1.2]{AB} showed that there is an absolute constant $c>0$ so that for any subgroup $H\le \mathbb{F}_q^*$ with 
		$$\#H\ge c\frac{q^{3/4}(\log q)^{1/2}(\log\log\log q)^{1/2}}{\log\log q},$$
		there exists a cyclic permutation $a_1,a_2,\cdots,a_{\#H}$  of $H$ such that $a_i+a_{i+1}\in H$ for any $1\le i\le \#H$ and $a_{\#H+1}=a_1$. Also, recently Chang \cite{Chang} studied the existence of the nontrivial arithmetic progressions in multiplicative subgroups of finite fields, and Vinh \cite{V14} obtained several results on the solvability of systems of certain equations over finite fields by applying spectral graph theory. 
	
	Motivated by the above results, in this paper, we further study an arithmetic cyclic permutation problem over finite fields. Let $f\in\mathbb Z[t]$ be a fixed nonconstant polynomial, let $k\ge2$ be a fixed integer, and let $p\equiv1\pmod{k}$ be a prime. Let 
	$$D_k(\mathbb{F}_p)=\{x^k:x\in\mathbb F_p^*\}$$
	be the subgroup of nonzero $k$-th powers in $\mathbb F_p$. Given a nonempty subset $A\subseteq\mathbb F_p$, we ask whether its elements can be
	arranged cyclically as
	$$a_1,a_2,\ldots,a_{\#A}$$
	in such a way that
	$$f(a_i+a_{i+1})\in D_k(\mathbb{F}_p)$$
	for every $1\le i\le \#A$, where $a_{\#A+1}=a_1$. 
	
	This problem possesses both an algebraic and a combinatorial component. The relevant algebraic quantity is 
		\begin{equation}\label{Eq. definition of N(p,k,f)}
		N(p,k,f)=\#\left\{x\in\mathbb{F}_p: f(x)\in D_k(\mathbb{F}_p)\right\}.
	\end{equation}
	In Lemma \ref{Lem. asymptotic formula for N(p,k,f)}, with the help of Weil's bound, we will see that $N(p,k,f)$ has an asymptotic formula 
	$$ N(p,k,f)=\frac{p}{k}+O_{k,f}(\sqrt{p}).$$
	For the combinatorial component of this problem, we naturally introduce the graph $G_p=G_{p,k,f}$ with
	vertex set $\mathbb F_p$, in which two distinct vertices $x,y$ are adjacent if and only if
	$$f(x+y)\in D_k(\mathbb{F}_p).$$
	An admissible cyclic permutation of $A$ is then precisely a Hamilton
	cycle in the subgraph $H_A$ of $G_p$ induced by $A$.

    Our main quantitative objective is to determine a universal threshold
    $c(p,k,f)$ such that every subset $A\subseteq\mathbb{F}_p$ with $\#A\ge c(p,k,f)$ admits the required cyclic permutation.
    
	\subsection{Main results} 
	
	Set 
	\begin{equation}\label{Eq. definition of Lambda(p,k,f)}
		\Lambda(p,k,f)=1+\frac{\deg(f)}{k}(1+(k-1)\sqrt{p}),
	\end{equation}
	and the threshold 
	$$c(p,k,f)=p+2-N(p,k,f)+\left\lfloor\frac{2p\Lambda(p,k,f)}{N(p,k,f)}\right\rfloor.$$
	Now we state our first theorem. 
	
	\begin{theorem}\label{Thm. A}
		Let $f(t)\in\mathbb{Z}[t]$ be a nonconstant polynomial with $\Delta(f)\neq 0$ and let $k\ge2$ be an integer. Let $p\equiv 1\pmod {k}$ be a sufficiently large prime. Then, for any subset $A\subseteq\mathbb{F}_p$ with $\#A\ge c(p,k,f)$, there exists a permutation $a_1, a_2,\cdots, a_{\#A}$ of $A$ such that 
			$$f(a_1+a_2), f(a_2+a_3), \cdots, f(a_{\#A-1}+a_{\#A}), f(a_{\#A}+a_1)\in D_k(\mathbb{F}_p).$$
	\end{theorem}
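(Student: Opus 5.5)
The plan is to view an admissible cyclic permutation of $A$ as a Hamilton cycle in the induced subgraph $H_A$ of $G_p$, and to verify the Chv\'atal--Erd\H{o}s condition $\kappa(H_A)\ge\alpha(H_A)$, where $\kappa$ is the vertex connectivity and $\alpha$ the independence number. Dirac's theorem is not enough here. The threshold $c(p,k,f)$ is only about $p-N(p,k,f)+O(\sqrt p)\approx(1-1/k)p$, so the minimum degree of $H_A$ is guaranteed only to be of order $\sqrt p$. The two ingredients are a pseudorandomness (expander-mixing) estimate for $G_p$ and a trivial degree count.

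\emph{Step 1 (spectral/mixing estimate).} Let $T=\{x\in\mathbb F_p: f(x)\in D_k(\mathbb F_p)\}$, so $\#T=N(p,k,f)$. Then $G_p$ is, up to loops, the Cayley sum graph with adjacency $1_T(x+y)$. Its eigenvalues are the Fourier coefficients $\widehat{1_T}(a)=\sum_x 1_T(x)e^{2\pi i ax/p}$. For $f(x)\ne0$ we write $1_T(x)=\frac1k\sum_{\chi^k=\varepsilon}\chi(f(x))$. The trivial character then contributes $\frac1k\#\{x:f(x)\ne0\}\cdot[a=0]$, and each nontrivial $\chi$ contributes a mixed sum $\sum_x\chi(f(x))e^{2\pi iax/p}$.

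Since $\Delta(f)\ne0$, for large $p$ the reduction of $f$ is squarefree, so $f$ is not a constant times a $k$-th power. Weil's bound for mixed sums then gives absolute value at most $\deg(f)\sqrt p$. The zeros of $f$ cost at most $\deg(f)/k$. Altogether $|\widehat{1_T}(a)|\le\frac{\deg f}{k}(1+(k-1)\sqrt p)$ for $a\ne0$. Accounting for the diagonal pairs $x=y$ adds $1$, which yields exactly $\Lambda(p,k,f)$.

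The standard expander-mixing argument for sum graphs (Parseval plus Cauchy--Schwarz) then gives, for all $X,Y\subseteq\mathbb F_p$,
$$\Bigl|e(X,Y)-\tfrac{N(p,k,f)}{p}\#X\,\#Y\Bigr|\le\Lambda(p,k,f)\sqrt{\#X\,\#Y},$$
where $e(X,Y)$ counts pairs $(x,y)\in X\times Y$ with $x\neq y$ adjacent.

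\emph{Step 2 (independence number).} If $X$ is independent, take $Y=X$ in Step 1 to get $\frac{N}{p}(\#X)^2\le\Lambda\,\#X$. Hence $\alpha(H_A)\le\alpha(G_p)\le p\Lambda/N$.

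\emph{Step 3 (connectivity).} Each vertex $x$ has at most $p-N+1$ non-neighbours in $\mathbb F_p$, namely the $y$ with $x+y\notin T$ together with possibly $y=x$. Hence $\delta(H_A)\ge\#A-(p-N)-2\ge\lfloor 2p\Lambda/N\rfloor$.

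Suppose $S\subseteq A$ separates $H_A$ into nonempty parts $X,Y$ with no edges between them. Each vertex of $X$ has all its neighbours in $X\cup S$, so $\#X\ge\delta+1-\#S$, and similarly for $Y$. Step 1 with $e(X,Y)=0$ gives $\sqrt{\#X\,\#Y}\le p\Lambda/N$.

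If $\#S<p\Lambda/N$, then both parts have size greater than $\delta-p\Lambda/N\ge p\Lambda/N$, which contradicts the bound on $\sqrt{\#X\,\#Y}$. So $\kappa(H_A)\ge p\Lambda/N\ge\alpha(H_A)$, and Chv\'atal--Erd\H{o}s applies. We also need $\#A\ge3$, which is obvious for large $p$.

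The main obstacle is the bookkeeping at the boundary. One must check that the floor in $c(p,k,f)$ and the strict versus non-strict inequalities in Steps 2--3 really give $\kappa\ge\alpha$ and not just $\kappa\ge\alpha-1$. If necessary I would sharpen Step 3 using the integrality of $\#X,\#Y$, or the fact that both parts must individually exceed $\alpha$ (each is at least $\delta+1-\#S$). Secondary care is needed in Step 1: confirming the Weil bound constant $\deg(f)\sqrt p$ for each nontrivial $\chi$ of order dividing $k$, and handling the roots of $f$ and the diagonal terms so that the constant is exactly $\Lambda(p,k,f)$.
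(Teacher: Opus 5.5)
Your proposal is correct and follows essentially the same route as the paper: the mixed character-sum bound gives $|\widehat{1_T}(a)|\le\frac{\deg f}{k}(1+(k-1)\sqrt p)$, hence $\|M_p-\frac{N}{p}J_p\|\le\Lambda$ (the diagonal costing $1$); expander mixing then yields $\alpha(H_A)\le p\Lambda/N$ and, via the minimum degree, $\kappa(H_A)\ge\alpha(H_A)$; and Chv\'atal--Erd\H{o}s finishes. The boundary issue you flag disappears once the degree count is done exactly: a vertex has at most $p-N$ non-neighbours other than itself, so $\delta(H_A)\ge\#A-(p-N)-1\ge\lfloor 2p\Lambda/N\rfloor+1>2p\Lambda/N$, as in the paper, and even your weaker bound suffices if you keep the $+1$ in $\#X\ge\delta+1-\#S$.
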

	
	\begin{remark}\label{Rem. of Thm. A}
		The significance of this result is that the conclusion holds uniformly for every subset with cardinality greater than the threshold $c(p,k,f)$, rather than merely for the full field or for a typical subset. 
	\end{remark}
	
Under the notation introduced above, for every sufficiently large prime $p\equiv 1\pmod{k}$, let $c_{\min}(p,k,f)$ denote the smallest integer such that any subset $A\subseteq\mathbb{F}_p$ with $\#A\ge c_{\min}(p,k,f)$ has a permutation $a_1, a_2,\cdots, a_{\#A}$ of $A$ satisfying 
	$$f(a_i+a_{i+1})\in D_k(\mathbb{F}_p)$$
	for any $1\le i \le \#A$, where $a_{\#A+1}=a_1$. Now we state our second theorem.
	
	\begin{theorem}\label{Thm. B}
		Let $f(t)\in\mathbb{Z}[t]$ be a nonconstant polynomial with $\Delta(f)\neq 0$ and let $k\ge2$ be an integer. Let $p\equiv 1\pmod {k}$ be a sufficiently large prime. Then
		$$p+2-N(p,k,f)\le c_{\min}(p,k,f)\le p+2-N(p,k,f)+\left\lfloor\frac{2p\Lambda(p,k,f)}{N(p,k,f)}\right\rfloor.$$
		Moreover, we have the asymptotic formula
		$$c_{\min}(p,k,f)=\left(1-\frac{1}{k}\right)p+O_{k,f}(\sqrt{p}).$$
	\end{theorem}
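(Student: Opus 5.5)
The upper bound is Theorem \ref{Thm. A} itself: every $A$ with $\#A\ge c(p,k,f)$ admits an admissible cyclic permutation, so by minimality $c_{\min}(p,k,f)\le c(p,k,f)$. The real work is the lower bound and the asymptotic formula.

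For the lower bound we exhibit a set $A$ with $\#A=p+1-N(p,k,f)$ that has no admissible cyclic ordering. Then $c_{\min}>p+1-N(p,k,f)$, i.e.\ $c_{\min}\ge p+2-N(p,k,f)$.

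The natural choice uses the map $x\mapsto 2x$. Let $B=\{x\in\mathbb{F}_p: f(x)\notin D_k(\mathbb{F}_p)\}$, so $\#B=p-N(p,k,f)$. Put $A_0=\{x/2: x\in B\}$; this is legitimate since $p$ is odd. Then $A=A_0\cup\{a^*\}$ for some extra element $a^*$ chosen so that $a^*$ has no admissible neighbour in $A$.

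The pairs inside $A_0$ do not simply fail, since $a+b$ need not lie in $B$. So instead we isolate a single vertex. Pick $a^*\notin A_0$ and demand that $f(a^*+a)\notin D_k(\mathbb{F}_p)$ for all $a\in A_0$. That is too strong a demand.

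A better construction is to take $A=\{a^*\}\cup\{a^* - y: \ldots\}$. More precisely, fix any $a^*\in\mathbb{F}_p$ and set
$$A=\{a^*\}\cup\{x-a^*: x\in B,\ x\neq 2a^*\}.$$
Every element $a\neq a^*$ of $A$ satisfies $a+a^*\in B$, so $f(a+a^*)\notin D_k(\mathbb{F}_p)$. Hence $a^*$ is isolated in $H_A$, and $H_A$ has no Hamilton cycle once $\#A\ge 3$.

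Next we count. The element $x=2a^*$ would give $x-a^*=a^*$, so it merges with $a^*$ itself. If we choose $a^*$ with $2a^*\notin B$, i.e.\ $f(2a^*)\in D_k(\mathbb{F}_p)$, then $\#A=1+\#B=p+1-N(p,k,f)$. Such an $a^*$ exists because $N(p,k,f)>0$ for large $p$, by Lemma \ref{Lem. asymptotic formula for N(p,k,f)}.

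Note that $A$ therefore has exactly $p+1-N(p,k,f)$ elements, and this quantity is $\ge3$ for large $p$, so the Hamilton-cycle notion applies. This gives the lower bound.

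For the asymptotic formula, Lemma \ref{Lem. asymptotic formula for N(p,k,f)} gives $N(p,k,f)=p/k+O_{k,f}(\sqrt p)$, so the lower bound is $(1-1/k)p+O(\sqrt p)$. For the upper bound, $\Lambda(p,k,f)=O_{k,f}(\sqrt p)$ and $p/N(p,k,f)\to k$, so $\lfloor 2p\Lambda/N\rfloor=O_{k,f}(\sqrt p)$. Sandwiching $c_{\min}$ between these two bounds yields the stated asymptotic formula.

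The only delicate step is the construction of the extremal set. In particular, choosing $a^*$ so that the translate $2a^*$ avoids $B$ is what keeps the cardinality exactly $p+1-N(p,k,f)$.
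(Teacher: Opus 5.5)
Your proof is correct and is essentially the paper's argument. Choosing $a^*=r_0/2$ with $r_0\in R_p$ makes your set $A=\{a^*\}\cup\{x-a^*: x\in B\}$ exactly the paper's $A_0=\{x_0\}\cup\{y\in\mathbb{F}_p: 1_{R_p}(x_0+y)=0\}$ with isolated vertex $x_0$, and your upper bound and $O_{k,f}(\sqrt p)$ sandwich come from Theorem~\ref{Thm. A} and Lemma~\ref{Lem. asymptotic formula for N(p,k,f)} just as in the paper; the abandoned first attempt via $x\mapsto 2x$ can simply be deleted.
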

	
	\subsection{Outline of the paper} Section 2 is devoted to the proof of Theorem \ref{Thm. A}. We shall prove Theorem \ref{Thm. B} in Section 3.

	\section{Proof of Theorem \ref{Thm. A}}
	\setcounter{lemma}{0}
	\setcounter{theorem}{0}
	\setcounter{equation}{0}
	\setcounter{conjecture}{0}
	\setcounter{remark}{0}
	\setcounter{corollary}{0}
	
	We shall prove our first theorem in five steps.

	\subsection{Bounds for some mixed exponential sums over finite fields}

	Mixed exponential sums over finite fields involve both multiplicative and additive characters. Let $\chi\in\widehat{\mathbb{F}_p^*}$ with $\ord(\chi)=m>1$ and let $\psi$ be an additive character of $\mathbb{F}_p$. Let $X$ be a nonsingular projective algebraic curve defined over $\mathbb{F}_p$ of genus $g_{X}$. Let $f,g\in\mathbb{F}_p(X)$ be two rational functions on $X$ satisfying $f\neq h^m$ and $g\neq h^p-h$ for any $h\in\overline{\mathbb{F}_p}(X)$. To state the following results, we briefly introduce some notations. The divisor of $f$ is defined by the formal sum
	$$\div(f)=\sum_{P\in X(\overline{\mathbb{F}_p})}\ord_{P}(f)\cdot P,$$
	where $\ord_{P}(f)$ is the order of $f$ at point $P$. Let 
	$$\div_{\infty}(f)=\sum_{\substack{P\in X(\overline{\mathbb{F}_p})\\ \ord_{P}(f)<0}}-\ord_{P}(f)\cdot P$$
	be the polar part of $\div(f)$. Also, set 
	$$\supp(\div(f))=\left\{P\in X(\overline{\mathbb{F}_p}): \ord_{P}(f)\neq 0\right\},$$
	and 
	$$\supp(\div_{\infty}(f))=\left\{P\in X(\overline{\mathbb{F}_p}): \ord_{P}(f)<0\right\}.$$
	In addition, the degree of $\div_{\infty}(f)$ is defined by 
	$$\deg(f)_{\infty}=\sum_{\substack{P\in X(\overline{\mathbb{F}_p})\\ \ord_{P}(f)<0}}-\ord_{P}(f).$$

	Castro and Moreno \cite{CM} improved Perel'muter's result \cite{P} and obtained the following useful bound (for consistency with the notation used throughout this paper, we have interchanged the roles of 
	$f$ and $g$ from their original setting). 
	
	\begin{theorem}[Castro and Moreno]\label{Thm. CM bound on mix exponential sums}
		Let notations be as above. Then 
		$$\left|\sum_{\substack{P \in X(\mathbb{F}_p)\\ P\not\in\supp(\div_{\infty}(f))\cup \supp(\div_{\infty}(g))}}\chi(f(P))\psi(g(P))\right|\le \left(2g_{X}-2+s+l+\deg(g)_{\infty}-r\right)\sqrt{p},$$
		where $s=\#\supp(\div(f))$, $l$ is the number of distinct poles of $g$, and $r$ is the number of closed points in $\supp(\div(f))\cap\supp(\div_{\infty}(g))$. 
	\end{theorem}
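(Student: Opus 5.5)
Since the displayed statement is quoted from Castro and Moreno \cite{CM}, the paper will simply cite it. If I had to prove it myself, I would use the $\ell$-adic cohomological version of Weil's method. Let $U\subseteq X$ be the open subset obtained by removing $S=\supp(\div(f))\cup\supp(\div_\infty(g))$. On $U$, $f$ is a unit and $g$ is regular, so the Kummer sheaf $\mathcal{L}_{\chi(f)}$ and the Artin--Schreier sheaf $\mathcal{L}_{\psi(g)}$ are defined. Their tensor product $\mathcal{F}=\mathcal{L}_{\chi(f)}\otimes\mathcal{L}_{\psi(g)}$ is a lisse rank-one sheaf on $U$, pure of weight $0$. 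I would first check that the sum in the statement equals $\sum_{P\in U(\mathbb{F}_p)}\mathrm{tr}(\mathrm{Frob}_P\mid\mathcal{F})$. The points of $S$ that the sum does not explicitly exclude are the zeros of $f$ that are not poles of $g$, and at these $\chi(f(P))=\chi(0)=0$, so they contribute nothing.

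Next I apply the Grothendieck--Lefschetz trace formula, which writes the sum as $\sum_i(-1)^i\mathrm{tr}(\mathrm{Frob}\mid H^i_c(U_{\overline{\mathbb{F}_p}},\mathcal{F}))$. Since $U$ is affine, $H^0_c=0$. I would show $H^2_c=0$ by proving that $\mathcal{F}$ is geometrically nontrivial. The hypotheses $f\neq h^m$ and $g\neq h^p-h$ make each factor geometrically nontrivial. The two factors cannot cancel each other, because the Kummer part has monodromy of order prime to $p$ (tame) while the Artin--Schreier part has monodromy of order $p$. Deligne's theorem (Weil II), or Weil's Riemann hypothesis for curves applied to the associated covering, then shows that $H^1_c$ is mixed of weights $\le 1$. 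This gives
$$\Bigl|\sum\Bigr|\le \dim H^1_c(U_{\overline{\mathbb{F}_p}},\mathcal{F})\cdot\sqrt{p}.$$

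It remains to compute $\dim H^1_c=-\chi_c(U,\mathcal{F})$ by the Grothendieck--Ogg--Shafarevich formula,
$$-\chi_c(U,\mathcal{F})=-(2-2g_X-\#S)+\sum_{P\in S}\mathrm{Sw}_P(\mathcal{F}).$$
For the count of removed points, $\#S=s+l-r$, where $r$ accounts for the overlap of $\supp(\div(f))$ with the poles of $g$. (This matches the bound as stated once $r$ is counted geometrically, e.g.\ over $\overline{\mathbb{F}_p}$.) The Kummer factor is tame, so the Swan conductor of $\mathcal{F}$ at $P$ equals that of $\mathcal{L}_{\psi(g)}$. At a pole of $g$ of order $d_P$ prime to $p$, this Swan conductor is $d_P$. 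Summing gives $\sum_P\mathrm{Sw}_P\le\deg(g)_\infty$, and hence the claimed bound $2g_X-2+s+l+\deg(g)_\infty-r$.

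The main obstacle is the Swan conductor at poles whose order is divisible by $p$. I would handle this with the standard reduction: replace $g$ by $g-(h^p-h)$ for a suitable local (and then global, via Riemann--Roch) $h$ that kills the leading polar term. This does not change $\psi(g(P))$ on rational points, and it strictly lowers the pole order. One must check that the reduction never raises $\deg(g)_\infty$ or the number of poles, and that it cannot make $g$ regular at a point without also reducing $l$ consistently. In the application in this paper, $g$ will be a polynomial of degree $<p$, so this step is vacuous there.
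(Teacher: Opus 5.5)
The paper does not prove this statement. It quotes it from Castro and Moreno and uses it only through the specialization in Lemma~\ref{Lem. mix exponential sums}, where $X=\mathbb{P}^1$ and $g=\xi t$ has a single simple pole. Your sketch is therefore an independent argument, and in outline it is correct.

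It is the $\ell$-adic form (Grothendieck--Lefschetz trace formula plus the Grothendieck--Ogg--Shafarevich formula) of the classical route through Weil's Riemann hypothesis for the associated Kummer--Artin--Schreier covering. Your quantity $\#S+\sum_{P\in S}\mathrm{Sw}_P$ is the degree of the conductor of $\chi(f)\psi(g)$. The $-r$ appears because a point lying both in $\supp(\div(f))$ and among the poles of $g$ is removed only once. The following steps are all fine:
\begin{itemize}
\item the identification of the sum with $\sum_{P\in U(\mathbb{F}_p)}$;
\item the vanishing of $H^0_c$ and $H^2_c$ (since $m\mid p-1$ is prime to $p$, the tensor product is geometrically trivial only if both factors are);
\item the tameness of the Kummer factor.
\end{itemize}
Compared with the citation, your route makes every term of the bound transparent.

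The one step to repair is the case $p\mid d_P$. It is not really an obstacle, but it should \emph{not} be handled globally.
\begin{itemize}
\item The Swan conductor at $P$ depends only on the restriction of $\mathcal{L}_{\psi(g)}$ to the inertia group at $P$. That is, it depends only on the class of $g$ in $\overline{\mathbb{F}_p}((t_P))$ modulo $\{h^p-h\}$.
\item If $g=c\,t_P^{-pe}+\cdots$, the local choice $h=c^{1/p}t_P^{-e}$ removes the leading term. Iterating gives $\mathrm{Sw}_P(\mathcal{L}_{\psi(g)})\le d_P$ in all cases, with equality iff $p\nmid d_P$.
\item Since $U$, $S$ and the Euler characteristic formula are fixed by the original $g$, this local inequality is all you need. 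There is nothing to check about $l$ or $\deg(g)_\infty$. If $\mathcal{L}_{\psi(g)}$ turns out to be unramified at $P$, then $P$ simply stays in $S$ with $\mathrm{Sw}_P=0$, and the bound is just not sharp there.
\end{itemize}
A global replacement $g\mapsto g-(h^p-h)$ via Riemann--Roch would actually be harmful. Such an $h$ generally has poles at other points, so the modified function acquires new poles. This changes $l$ and $\deg(g)_\infty$. It also removes rational points from the range of summation, so the sum itself changes.

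Your remark on $r$ can be sharpened. With $s$ and $l$ counted in $X(\overline{\mathbb{F}_p})$, as the paper's definitions indicate, you obtain the bound with $r$ equal to the number of geometric points of $\supp(\div(f))\cap\supp(\div_\infty(g))$. This set is Galois-stable, so the number of closed points in it is no larger. Hence the printed bound follows under either reading.
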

	
	Now we apply Castro and Moreno's result to our problem. Let 
	$$X=\mathbb{P}^1(\overline{\mathbb{F}_p})=\left\{[x:y]: x,y\in\overline{\mathbb{F}_p}\ \text{and}\ (x,y)\neq (0,0)\right\}$$ be the projective line with the genus $g_{X}=0$. Let $\infty=[1:0]$, and identify the point $[x:1]$ with the affine point $x$ for any $x\in\mathbb{F}_p$. 
	
	Let $f(t)\in\mathbb{F}_p[t]$ with $\deg(f)=n\ge1$ such that $f(t)$ has no multiple roots in $\overline{\mathbb{F}_p}$ and let $g(t)=\xi t$ with $\xi\in\mathbb{F}_p^*$. Let 
	$$f(t)=a_n(t-\alpha_1)(t-\alpha_2)\cdots(t-\alpha_n)$$
	be the decomposition of $f$ in $\overline{\mathbb{F}_p}[t]$. Then one can verify that 
	\begin{align*}
		\div(f^h)&=\alpha_1+\alpha_2+\cdots+\alpha_n-n\cdot\infty,\\
		\div(g^h)&=0-\infty,
	\end{align*}
	where $f^h=f(t_1/t_2), g^h=g(t_1/t_2)\in\mathbb{F}_p(X)$. By adopting the notations in Theorem \ref{Thm. CM bound on mix exponential sums}, we immediately see that $s=n+1$, $l=1$, $\deg(g)_{\infty}=1$ and $r=1$. Clearly $e_p: \mathbb{F}_p\rightarrow\mathbb{C}^*$ defined by $e_p(z)=e^{2\pi \i z/p}$ is an additive character of $\mathbb{F}_p$, where $\i$ is a primitive $4$-th root of unity with argument $\pi/2$. Note that 
	$$\sum_{\substack{P \in X(\mathbb{F}_p)\\ P\not\in\supp(\div_{\infty}(f^h))\cup \supp(\div_{\infty}(g^h))}}\chi(f^h(P))e_p(g^h(P))=\sum_{x\in\mathbb{F}_p}\chi(f(x))e_p(\xi\cdot x).$$
	
We briefly verify the two non-degeneracy conditions required for the application of the Castro--Moreno theorem. Since every zero of $f^h$ is simple, the rational function $f^h\neq u^m$ for any $u\in\overline{\mathbb F}_p(X)$. Indeed, every zero and pole of an $m$-th power has order divisible by $m$, whereas $f^h$ has a zero of order $1$.

Moreover, $g^h=\xi t_1/t_2$ has a simple pole at infinity. On the other hand, if $u\in\overline{\mathbb F}_p(X)$ has a pole of order $r>0$, then $u^p-u$ has a pole of order $pr$, and hence every pole of $u^p-u$ has order divisible by $p$. Therefore,
\begin{align*}
\xi t\neq u^p-u
\qquad
\text{for any}\ u\in\overline{\mathbb F}_p(X).
\end{align*}
Thus both non-degeneracy hypotheses of Theorem \ref{Thm. CM bound on mix exponential sums} are satisfied.	Based on the above discussions, we obtain the following result.

    \begin{lemma}\label{Lem. mix exponential sums}
    	Let $p$ be a prime. Let $\chi\in\widehat{\mathbb{F}_p^*}$ with $\ord(\chi)=m>1$, and let $f(t)\in\mathbb{F}_p[t]$ with $\deg(f)=n\ge1$ such that $f(t)$ has no multiple roots in $\overline{\mathbb{F}_p}$. Then, for any $\xi\in\mathbb{F}_p^*$ we have 
    	$$\left|\sum_{x\in\mathbb{F}_p}\chi(f(x))e_p(\xi\cdot x)\right|\le n\sqrt{p}.$$
    \end{lemma}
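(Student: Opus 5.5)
The plan is to specialize Theorem \ref{Thm. CM bound on mix exponential sums} to $X=\mathbb{P}^1$ with the rational functions $f^h=f(t_1/t_2)$ and $g^h=\xi t_1/t_2$, as set up in the preceding paragraphs, and then read off the constant. First I will check the divisor data. Since $f$ has no multiple roots in $\overline{\mathbb{F}_p}$, we have $\div(f^h)=\alpha_1+\cdots+\alpha_n-n\cdot\infty$ with the $\alpha_i$ distinct, so $s=\#\supp(\div(f^h))=n+1$. Next, $\div(g^h)=0-\infty$, which gives $l=1$ and $\deg(g^h)_\infty=1$. The closed points of $\supp(\div(f^h))\cap\supp(\div_\infty(g^h))$ reduce to the single point $\infty$, which is $\mathbb{F}_p$-rational, so $r=1$. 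Finally, the genus of $\mathbb{P}^1$ is $g_X=0$.

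Second, I will verify the two non-degeneracy hypotheses, exactly as in the discussion above.
\begin{itemize}
\item $f^h$ is not an $m$-th power in $\overline{\mathbb{F}_p}(X)$. It has a simple zero at $\alpha_1$, whereas every zero of an $m$-th power has order divisible by $m\ge 2$. This uses $n\ge1$.
\item $g^h\neq u^p-u$. Any pole of $u^p-u$ has order divisible by $p$, while $g^h$ has a simple pole at $\infty$. This uses $\xi\neq0$.
\end{itemize}

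Third, I will identify the sum on the left side of Theorem \ref{Thm. CM bound on mix exponential sums} with $\sum_{x\in\mathbb{F}_p}\chi(f(x))e_p(\xi x)$. The excluded set $\supp(\div_\infty(f^h))\cup\supp(\div_\infty(g^h))$ is just $\{\infty\}$, and the remaining rational points $[x:1]$ correspond to $x\in\mathbb{F}_p$, where $f^h([x:1])=f(x)$ and $g^h([x:1])=\xi x$. Points with $f(x)=0$ contribute $0$ under the convention $\chi(0)=0$, so including them is harmless.

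Substituting into the bound gives
$$\left(2g_X-2+s+l+\deg(g^h)_\infty-r\right)\sqrt p=(0-2+(n+1)+1+1-1)\sqrt p=n\sqrt p.$$
The main potential obstacle is bookkeeping. One must make sure that $r$ counts closed points correctly and that the point at $\infty$ is excluded rather than contributing an extra term. Both are settled because $\infty$ is a pole of both functions and so lies in the excluded set.
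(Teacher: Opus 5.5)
Your proposal is correct and follows the same route as the paper. The paper's proof likewise specializes the Castro--Moreno bound to $X=\mathbb{P}^1$ with $f^h$ and $g^h=\xi t_1/t_2$, reads off $g_X=0$, $s=n+1$ and $l=\deg(g^h)_\infty=r=1$, checks the same two non-degeneracy conditions (the simple zero of $f^h$ and the simple pole of $g^h$ at $\infty$), and obtains $n\sqrt{p}$.
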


    \subsection{Bounds for the Fourier coefficients}

	We begin with the following lemma involving the characteristic function of $D_k(\mathbb{F}_p)$.
	
	\begin{lemma}\label{Lem. characteristic function of kth power}
		Let $k\ge2$ be an integer and $p\equiv 1\pmod {k}$ be a prime with $p-1=km$. Then, for any generator $\chi_p\in\widehat{\mathbb{F}_p^*}$ we have 
		$$1_{D_k(\mathbb{F}_p)}(x)=\frac{1}{k}\sum_{j=0}^{k-1}\chi_p^{mj}(x)=\begin{cases}
			1 & \mbox{if}\ x\in D_k(\mathbb{F}_p),\\
			0 & \mbox{otherwise},
		\end{cases}$$
		where $\chi_p^0=\varepsilon$ is the trivial character. 
	\end{lemma}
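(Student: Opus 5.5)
The proof rests on character orthogonality for the cyclic group $\mathbb{F}_p^*$. Fix a generator $g$ of $\mathbb{F}_p^*$ and set $\zeta=\chi_p(g)$. Because $\chi_p$ generates $\widehat{\mathbb{F}_p^*}$, $\zeta$ is a primitive $(p-1)$-th root of unity. Then $\chi_p^m$ has order exactly $k$, and $\omega:=\chi_p^m(g)=\zeta^m$ is a primitive $k$-th root of unity. We split into the cases $x=0$ and $x\neq 0$.

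\textbf{Case $x=0$.} Since $0\notin D_k(\mathbb{F}_p)$, the left side is $0$. For the sum, the convention $\chi(0)=0$ applies to every character. This includes the trivial character $\varepsilon=\chi_p^0$, which is the reading needed here, so that $\varepsilon(0)=0$. Hence every term vanishes and the sum is $0$.

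\textbf{Case $x\neq0$.} Write $x=g^a$, so that $\chi_p^{mj}(x)=\omega^{aj}$. Then
$$\frac1k\sum_{j=0}^{k-1}\omega^{aj}$$
is a finite geometric sum. It equals $1$ if $\omega^a=1$, that is, if $k\mid a$, and $0$ otherwise.

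It remains to check that $k\mid a$ is equivalent to $x\in D_k(\mathbb{F}_p)$. If $k\mid a$, then $x=(g^{a/k})^k$, so $x\in D_k(\mathbb{F}_p)$. Conversely, if $x=y^k$ with $y=g^b$, then $a\equiv kb\pmod{p-1}$. Since $k\mid p-1$, this gives $k\mid a$. Combining the two cases yields the claimed formula.

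No step is a genuine obstacle. The only points needing care are the convention $\varepsilon(0)=0$, so that the formula is correct at $x=0$, and the observation that $\chi_p^m$ has exact order $k$, which uses that $\chi_p$ is a generator.
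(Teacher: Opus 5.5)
Your proposal is correct and follows essentially the same route as the paper: fix a generator $g$ of $\mathbb{F}_p^*$ and use that $\chi_p^m(g)$ is a primitive $k$-th root of unity, so that $\chi_p^m(x)=1$ exactly when $x\in D_k(\mathbb{F}_p)$; then handle $x=0$ via the convention $\varepsilon(0)=0$, and evaluate the remaining case with the geometric-series identity. Your explicit check that $k\mid a$ if and only if $x\in D_k(\mathbb{F}_p)$ only spells out a step the paper leaves implicit.
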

	
	\begin{proof}
		Fix a generator $g$ of $\mathbb{F}_p^*$. Since $\chi_p$ is a generator of $\widehat{\mathbb{F}_p^*}$, the number $\chi_p(g)$ is a primitive $(p-1)$-th root of unity. From this, $\chi_p^m(g^j)=1$ if and only if $j\equiv 0\pmod {k}$. In other words, for any $x\in\mathbb{F}_p^*$ we have 
		$$\chi_p^m(x)=1 \Longleftrightarrow x\in D_k(\mathbb{F}_p).$$ 
		Hence the result holds trivially for $x\in D_k(\mathbb{F}_p)\cup\{0\}$. On the other hand, for any $x\in\mathbb{F}_p^*\setminus D_k(\mathbb{F}_p)$, 
		$$\frac{1}{k}\sum_{j=0}^{k-1}\chi_p^{mj}(x)=\frac{1-\chi_p^{mk}(x)}{k(1-\chi_p^m(x))}=\frac{1-\chi_p^{p-1}(x)}{k(1-\chi_p^m(x))}=0.$$
		This proves the lemma. 
	\end{proof}
	
    Recall that $f(t)\in\mathbb{Z}[t]$ is a nonconstant polynomial with $\Delta(f)\neq 0$ and $k\ge2$ is an integer. For any prime $p\equiv 1\pmod{k}$, define the subset 
    \begin{equation}\label{Eq. definition of the set Rp}
    	R_p=\left\{x\in\mathbb{F}_p: f(x)\in D_k(\mathbb{F}_p)\right\}.
    \end{equation}
	By Lemma \ref{Lem. characteristic function of kth power} we have 
	\begin{equation}\label{Eq. chracteristic function of Rp}
		1_{R_p}(x)=\frac{1}{k}\sum_{j=0}^{k-1}\chi_p^{mj}(f(x))=\begin{cases}
			1 & \mbox{if}\ x\in R_p,\\
			0 & \mbox{otherwise}.
		\end{cases}
	\end{equation}
	
	We can obtain the asymptotic formula for $N(p,k,f)$ by (\ref{Eq. chracteristic function of Rp}). To do this, we first introduce the well-known Weil theorem (cf. \cite[Theorem 5.41]{LN}).
	
	\begin{lemma}\label{Lem. the Weil Bound}
		Let $\chi\in\widehat{\mathbb{F}_p^*}$ with $\ord(\chi)=d>1$, and let $f(t)\in\mathbb{F}_p[t]$ be a monic polynomial with $f(t)\neq g(t)^d$ for any $g(t)\in\mathbb{F}_p[t]$. Then, for any $a\in\mathbb{F}_p$ we have 
		$$\left|\sum_{x\in\mathbb{F}_p}\chi(af(x))\right|\le (r-1)\sqrt{p},$$
		where $r$ is the number of distinct roots of $f(t)$ in an algebraic closure $\overline{\mathbb{F}_p}$.
	\end{lemma}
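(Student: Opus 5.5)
Since this is the classical Weil bound (as in \cite[Theorem 5.41]{LN}), I would follow the standard $L$-function route. Then the only deep input is the Riemann hypothesis for curves over finite fields.

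\emph{Reduction.} If $a=0$, every term is $\chi(0)=0$ and there is nothing to prove. If $a\neq 0$, then $\chi(af(x))=\chi(a)\chi(f(x))$ with $|\chi(a)|=1$, so it suffices to bound $S_1=\sum_{x\in\mathbb{F}_p}\chi(f(x))$. I would then write $f=\prod_{i=1}^r(t-\alpha_i)^{e_i}$ over $\overline{\mathbb{F}_p}$. The hypothesis $f\neq g^d$ will be used in the form: not all $e_i$ are divisible by $d$, i.e.\ $f$ is not a $d$-th power over $\overline{\mathbb{F}_p}$. To deduce this from the stated hypothesis over $\mathbb{F}_p$, I would use the fact that $f$ is monic together with Galois invariance of the multiplicities $e_i$.

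\emph{$L$-function.} Define a function on monic polynomials $g\in\mathbb{F}_p[t]$ by $\lambda(g)=\chi\big(\mathrm{Res}(g,f)\big)$ (up to a harmless sign convention). Since the resultant is multiplicative in $g$ and depends only on $g$ modulo $f$, $\lambda$ is completely multiplicative. For an irreducible $P$ of degree $s$ with root $\beta\in\mathbb{F}_{p^s}$, one has $\lambda(P)=\chi(N_{\mathbb{F}_{p^s}/\mathbb{F}_p}(f(\beta)))$. Hence the usual Euler-product/logarithmic-derivative computation gives
\[
L(T)=\sum_{g\ \mathrm{monic}}\lambda(g)T^{\deg g}=\exp\Big(\sum_{s\ge1}\frac{S_s}{s}T^s\Big),\qquad S_s=\sum_{x\in\mathbb{F}_{p^s}}\chi\big(N_{\mathbb{F}_{p^s}/\mathbb{F}_p}(f(x))\big).
\]
I would then show that $\sum_{\deg g=j}\lambda(g)=0$ for $j\ge r$. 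Indeed, $\mathrm{Res}(g,f)=\pm\prod_i g(\alpha_i)^{e_i}$, and as $g$ runs over monic polynomials of degree $j\ge r$, the values $(g(\alpha_i))_i$ are equidistributed over $\prod_i\mathbb{F}_p(\alpha_i)$ (grouped by Galois orbits). Because some $e_i\not\equiv 0\pmod d$, the resulting character sum over this product is a nontrivial character sum and hence vanishes. So $L(T)$ is a polynomial of degree at most $r-1$. Writing $L(T)=\prod_{j=1}^{r-1}(1-\omega_jT)$ (with some $\omega_j$ possibly $0$) gives $S_1=-\sum_j\omega_j$.

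\emph{Main obstacle: $|\omega_j|\le\sqrt p$.} This is the Riemann hypothesis step. I would identify $L(T)$ as a factor of the numerator of the zeta function of the smooth projective model of the Kummer curve $y^d=f(x)$; it is the $\chi$-isotypic component under $y\mapsto\zeta y$. Then Weil's theorem gives $|\omega_j|=\sqrt p$ for every nonzero $\omega_j$. Alternatively, for a self-contained route I would use Stepanov's auxiliary-polynomial method. That method yields $|S_s|\le C p^{s/2}$ for all $s$, and hence, through the rationality of $L$, $|\omega_j|\le\sqrt p$. Either way, $|S_1|\le(r-1)\sqrt p$, which proves the lemma. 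In this paper I would simply invoke \cite[Theorem 5.41]{LN} for this step.
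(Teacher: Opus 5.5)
Your proposal is correct and is essentially the standard proof behind \cite[Theorem 5.41]{LN}, which the paper cites without giving an argument of its own: reduce to $a\neq 0$, show the $L$-function $L(T)$ is a polynomial of degree at most $r-1$, and bound its reciprocal roots by $|\omega_j|\le\sqrt p$. One correction to your Riemann-hypothesis step: $L(T)$ need not divide the zeta numerator of the Kummer curve, and a nonzero $\omega_j$ can have absolute value $1$ rather than $\sqrt p$. For example, $f=t^2-1$ with $\chi$ quadratic gives $\sum_x\chi(x^2-1)=-1$, so $L(T)=1-T$. Such weight-zero factors appear whenever $\chi(f)$ is unramified at a removed point, i.e.\ at $\infty$ when $d\mid\deg f$, or at a root with $d\mid e_i$. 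Since you only need $|\omega_j|\le\sqrt p$, which still holds, your argument goes through.
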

	
	The next result establishes an asymptotic formula for $N(p,k,f)$.
	
	\begin{lemma}\label{Lem. asymptotic formula for N(p,k,f)}
		Let $f(t)\in\mathbb{Z}[t]$ with $\deg(f)=n\ge 1$, leading coefficient $a_n$ and $\Delta(f)\neq 0$. Let $k\ge2$ be an integer and let $p\equiv 1\pmod {k}$ be a prime with $a_n\cdot \Delta(f)\not\equiv 0\pmod{p}$. Then we have the asymptotic formula
		$$N(p,k,f)=\frac{p}{k}+O_{k,f}(\sqrt{p}).$$
	\end{lemma}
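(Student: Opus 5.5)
Sum the characteristic function (\ref{Eq. chracteristic function of Rp}) of $R_p$ over $x\in\mathbb{F}_p$ and separate the trivial character. Since $N(p,k,f)=\#R_p$, this gives
\begin{equation*}
N(p,k,f)=\frac{1}{k}\sum_{x\in\mathbb{F}_p}\varepsilon(f(x))+\frac{1}{k}\sum_{j=1}^{k-1}\sum_{x\in\mathbb{F}_p}\chi_p^{mj}(f(x)),
\end{equation*}
where $p-1=km$. The first sum counts the $x\in\mathbb{F}_p$ with $f(x)\neq0$, because $\varepsilon(0)=0$ by the convention fixed in the notation. Since $f$ has at most $n$ roots in $\mathbb{F}_p$, the main term equals $(p-\theta)/k$ with $0\le\theta\le n$. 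This is $p/k+O(n)$, which is already within the required error.

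For the error terms, reduce $f$ modulo $p$. Since $p\nmid a_n$, the reduction $\bar f$ still has degree $n$, and we write $\bar f=\bar a_n h$ with $h$ monic of degree $n$. Since $p\nmid\Delta(f)$, the discriminant of $\bar f$ is the reduction of $\Delta(f)$, hence nonzero. So $h$ is separable with exactly $n$ distinct roots in $\overline{\mathbb{F}_p}$. For $1\le j\le k-1$, the character $\chi_p^{mj}$ has order $d=k/\gcd(j,k)>1$.

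Next check the hypothesis of Lemma \ref{Lem. the Weil Bound} that $h\neq g^d$ for every $g\in\mathbb{F}_p[t]$. If $h=g^d$ with $d\ge2$, every root of $h$ would have multiplicity at least $d$, contradicting separability. Since $\deg h=n\ge1$, the case of $h$ constant does not arise. Lemma \ref{Lem. the Weil Bound} with $a=\bar a_n$ then gives
\begin{equation*}
\Bigl|\sum_{x\in\mathbb{F}_p}\chi_p^{mj}(f(x))\Bigr|\le(n-1)\sqrt{p}.
\end{equation*}
Summing over $j$ bounds the total error by $\frac{k-1}{k}(n-1)\sqrt{p}+\frac{n}{k}$, which is $O_{k,f}(\sqrt p)$.

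The only point needing care is passing from $\Delta(f)\neq0$ over $\mathbb{Z}$ to separability of $f$ modulo $p$. This holds because the discriminant is a universal polynomial in the coefficients (up to the normalizing power of $a_n$), so it commutes with reduction when $p\nmid a_n$. Everything else is bookkeeping.
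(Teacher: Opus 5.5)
Your proposal is correct and follows the paper's proof: you expand $1_{R_p}$ in characters, note that the trivial character contributes $(p-\#\{x: f(x)=0\})/k$, and bound each of the $k-1$ nontrivial sums by $(n-1)\sqrt{p}$ via the Weil bound. Your explicit normalization $\bar f=\bar a_n h$ with $h$ monic, together with the check that $h\neq g^d$, fills in hypotheses of the Weil bound that the paper applies without comment.
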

	
	\begin{proof}
		By (\ref{Eq. chracteristic function of Rp}) one can verify that 
		\begin{align}\label{Eq. computation of N(p,k,f)}
			N(p,k,f)
&=\sum_{x\in\mathbb{F}_p}1_{R_p}(x)\notag\\
&=\frac{1}{k}\sum_{x\in\mathbb{F}_p}\sum_{j=0}^{k-1}\chi_p^{mj}(f(x))\notag\\
&=\frac{1}{k}\sum_{x\in\mathbb{F}_p}\varepsilon(f(x))+\frac{1}{k}\sum_{j=1}^{k-1}\sum_{x\in\mathbb{F}_p}\chi_p^{mj}(f(x))\notag\\
&=\frac{1}{k}\left(p-\#\left\{x\in\mathbb{F}_p: f(x)=0\right\}\right)+\frac{1}{k}\sum_{j=1}^{k-1}\sum_{x\in\mathbb{F}_p}\chi_p^{mj}(f(x)).
		\end{align}
	   Since $a_n\cdot \Delta(f)\not\equiv 0\pmod {p}$, the reduction of $f$ modulo $p$ is a polynomial over $\mathbb{F}_p$ of degree $n$ and has no multiple roots in $\overline{\mathbb{F}_p}$. Hence, applying Lemma \ref{Lem. the Weil Bound} and (\ref{Eq. computation of N(p,k,f)}), we obtain 
		$$N(p,k,f)\ge \frac{p-n}{k}-\frac{1}{k}\sum_{j=1}^{k-1}\left| \sum_{x\in\mathbb{F}_p}\chi_p^{mj}(f(x)) \right|\ge \frac{p-n}{k}-\frac{(k-1)(n-1)}{k}\sqrt{p},$$
	   and
	   $$N(p,k,f)\le  \frac{p}{k}+\frac{1}{k}\sum_{j=1}^{k-1}\left| \sum_{x\in\mathbb{F}_p}\chi_p^{mj}(f(x)) \right|\le \frac{p}{k}+\frac{(k-1)(n-1)}{k}\sqrt{p}.$$
		This clearly implies that 
		$$N(p,k,f)=\frac{p}{k}+O_{k,f}(\sqrt{p}).$$
		
		In view of the above, we have completed the proof. 
	\end{proof}

	Discrete Fourier analysis on finite abelian groups has extensive applications in additive combinatorics. Readers may refer to \cite[Chapter 4]{TV} for a comprehensive introduction to this topic. We adopt the notations in \cite[Definition 4.6]{TV}.
	
	The Fourier transform of $1_{R_p}$ is a function $\widehat{1_{R_p}}: \mathbb{F}_p\rightarrow\mathbb{C}$ defined by 
	\begin{equation}\label{Eq. the Fourier transform of 1R}
			\widehat{1_{R_p}}(\xi)=\frac{1}{p}\sum_{x\in\mathbb{F}_p}1_{R_p}(x)\overline{e_p(\xi\cdot x)}=\frac{1}{p}\sum_{x\in\mathbb{F}_p}1_{R_p}(x)e_p(-\xi\cdot x)\quad (\forall \xi\in\mathbb{F}_p).
	\end{equation}
	It is known that $1_{R_p}$ has the Fourier expansion (cf. \cite[(4.4)]{TV}) 
	$$1_{R_p}(x)=\sum_{\xi\in\mathbb{F}_p}\widehat{1_{R_p}}(\xi)e_p(\xi\cdot x)\quad (\forall x\in\mathbb{F}_p).$$
	Hence, $\widehat{1_{R_p}}(\xi)$ is also called the Fourier coefficient of $1_{R_p}$. The next result gives an upper bound for the Fourier coefficient $\widehat{1_{R_p}}(\xi)$.
	
	\begin{lemma}\label{Lem. bounds for the Fourier coefficients}
		 Under the same conditions as in Lemma \ref{Lem. asymptotic formula for N(p,k,f)},  for any $\xi\in\mathbb{F}_p^*$, we have 
		 $$\left|\widehat{1_{R_p}}(\xi)\right|\le \frac{n}{pk}\left(1+(k-1)\sqrt{p}\right).$$
	\end{lemma}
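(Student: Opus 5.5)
Substitute the character expansion (\ref{Eq. chracteristic function of Rp}) into the definition (\ref{Eq. the Fourier transform of 1R}) and bound the resulting pieces one at a time. For $\xi\in\mathbb{F}_p^*$ this gives
\begin{align*}
\widehat{1_{R_p}}(\xi)=\frac{1}{pk}\sum_{x\in\mathbb{F}_p}\varepsilon(f(x))e_p(-\xi x)+\frac{1}{pk}\sum_{j=1}^{k-1}\sum_{x\in\mathbb{F}_p}\chi_p^{mj}(f(x))e_p(-\xi x).
\end{align*}
The claimed bound $\frac{n}{pk}(1+(k-1)\sqrt{p})$ splits accordingly. The trivial-character term should contribute at most $\frac{n}{pk}$, and each of the $k-1$ nontrivial terms at most $\frac{n\sqrt{p}}{pk}$.

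For the trivial character, note that $\varepsilon(f(x))=1$ unless $f(x)=0$, in which case it is $0$ by the convention $\chi(0)=0$. So the inner sum equals
\begin{align*}
\sum_{x\in\mathbb{F}_p}e_p(-\xi x)-\sum_{\substack{x\in\mathbb{F}_p\\ f(x)=0}}e_p(-\xi x).
\end{align*}
The full additive sum vanishes because $\xi\neq0$. The second sum has at most $n$ terms of modulus $1$, since the reduction of $f$ modulo $p$ has degree $n$ (using $p\nmid a_n$). Hence this piece has modulus at most $n$.

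For the nontrivial characters, take $1\le j\le k-1$. Then $\chi_p^{mj}$ has order $k/\gcd(j,k)>1$. Under the hypotheses of Lemma \ref{Lem. asymptotic formula for N(p,k,f)}, the reduction of $f$ mod $p$ has degree $n$ and no multiple roots in $\overline{\mathbb{F}_p}$, because $p\nmid a_n\Delta(f)$. We also have $-\xi\in\mathbb{F}_p^*$. So Lemma \ref{Lem. mix exponential sums} applies directly and bounds each inner sum by $n\sqrt{p}$.

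Summing with the triangle inequality gives $|\widehat{1_{R_p}}(\xi)|\le\frac{1}{pk}\left(n+(k-1)n\sqrt{p}\right)$, which is the claim. The only step needing care is checking the hypotheses of Lemma \ref{Lem. mix exponential sums}: that the order of $\chi_p^{mj}$ is genuinely greater than $1$, and that squarefreeness survives reduction mod $p$. The second follows from the discriminant condition, exactly as in the proof of Lemma \ref{Lem. asymptotic formula for N(p,k,f)}.
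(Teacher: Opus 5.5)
Your proposal is correct and matches the paper's proof essentially step for step. It uses the same split into the trivial-character sum (bounded by $n$ because the complete additive sum vanishes for $\xi\neq 0$) and the $k-1$ nontrivial-character sums (each bounded by $n\sqrt{p}$ via Lemma \ref{Lem. mix exponential sums}). Your explicit check that $\chi_p^{mj}$ has order $k/\gcd(j,k)>1$ is a detail the paper only asserts as $\chi_p^{mj}\neq\varepsilon$; otherwise the two arguments coincide.
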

	
	\begin{proof}
		By (\ref{Eq. chracteristic function of Rp}) and (\ref{Eq. the Fourier transform of 1R}), for any $\xi\in\mathbb{F}_p^*$ one can verify that 
		\begin{align}\label{Eq. Fourier coefficient is S0+Sj}
			\widehat{1_{R_p}}(\xi)
&=\frac{1}{p}\sum_{x\in\mathbb{F}_p}1_{R_p}(x)e_p(-\xi\cdot x)\notag\\
&=\frac{1}{kp}\sum_{x\in\mathbb{F}_p}\sum_{j=0}^{k-1}\chi_p^{mj}(f(x))e_p(-\xi\cdot x)\notag\\
&=\frac{1}{kp}\sum_{j=0}^{k-1}S_j,
		\end{align}
where 
$$S_j=\sum_{x\in\mathbb{F}_p}\chi_p^{mj}(f(x))e_p(-\xi\cdot x)$$
for any $0\le j\le k-1$. 

We first consider $S_0$. For any $\xi\in\mathbb{F}_p^*$, it is clear that 
$$\sum_{x\in\mathbb{F}_p}e(-\xi\cdot x)=\sum_{x\in\mathbb{F}_p}e_p(x)=0.$$
Thus, one can verify that 
       \begin{align*}
       	S_0
&=\sum_{x\in\mathbb{F}_p}\varepsilon(f(x))e_p(-\xi\cdot x)\\
&=\sum_{x\in\mathbb{F}_p}e_p(-\xi\cdot x)-\sum_{\substack{x\in\mathbb{F}_p\\ f(x)=0}}e_p(-\xi\cdot x)\\
&=-\sum_{\substack{x\in\mathbb{F}_p\\ f(x)=0}}e_p(-\xi\cdot x).
       \end{align*}
        From this we obtain 
       \begin{equation}\label{Eq. bound for S0}
       	|S_0|\le \deg(f)=n.
       \end{equation}

     Next we consider $S_j$ for $1\le j\le k-1$. Since $a_n\cdot \Delta(f)\not\equiv 0\pmod {p}$, the reduction of $f$ modulo $p$ is a polynomial over $\mathbb{F}_p$ of degree $n\ge1$ and has no multiple roots in $\overline{\mathbb{F}_p}$. Noting that $\chi_p^{mj}\neq\varepsilon$ for any $1\le j\le k-1$, by Lemma \ref{Lem. mix exponential sums} we obtain 
     \begin{equation}\label{Eq. bound for Sj}
     	|S_j|\le n\sqrt{p}
     \end{equation}
    for any $1\le j\le k-1$.
    
    Combining (\ref{Eq. bound for S0}) and (\ref{Eq. bound for Sj}) with (\ref{Eq. Fourier coefficient is S0+Sj}), we obtain 
    $$\left|\widehat{1_{R_p}}(\xi)\right|\le \frac{1}{pk}\sum_{j=0}^{k-1}|S_j|\le \frac{n}{pk}\left(1+(k-1)\sqrt{p}\right).$$
    This completes the proof. 
	\end{proof}

	\subsection{The expander mixing lemma}
	
	Graph theory plays an important role in the study of additive combinatorics. We adopt the notations in \cite[Chapter 6]{TV}. Recall that $f(t)\in\mathbb{Z}[t]$ is a nonconstant polynomial with $\Delta(f)\neq 0$ and $k\ge2$ is an integer. For any prime $p\equiv 1\pmod{k}$, we define a simple undirected graph $G_p$ with vertex set $\mathbb{F}_p=\{x_1,x_2,\cdots,x_p\}$, where $x_p=0$. Two different vertices $x,y$ are adjacent if and only if $x+y\in R_p$, where $R_p$ is defined by (\ref{Eq. definition of the set Rp}). 
	Also, we let $M_p$ denote the adjacency matrix of the graph $G_p$. Clearly 
	$$\frac{\#R_p}{p}=\frac{N(p,k,f)}{p}$$
	is the expected edge density of graph $G_p$.  Let 
	$$\left\|  M_p-\frac{N(p,k,f)}{p}J_p   \right\|,$$
	where $J_p$ is a $p\times p$ matrix with all entries $1$, and 
	$$\|M\|=\sup\left\{\frac{|M\u|}{|\u|}: \u\in\mathbb{C}^p\setminus\{\0\}\right\}$$
	denotes the operator norm of a $p\times p$ matrix $M$. Roughly speaking, this quantity measures how far the adjacency matrix $M_p$ is from the matrix corresponding to a perfectly uniform graph. 
	
	Now we have the following result.
	
    \begin{lemma}\label{Lem. the operator norm of adjacency matrix}
    	Under the same conditions as in Lemma \ref{Lem. asymptotic formula for N(p,k,f)}, we have 
    	 $$\left\|  M_p-\frac{N(p,k,f)}{p}J_p   \right\|\le 1+\frac{n}{k}\left(1+(k-1)\sqrt{p}\right)=\Lambda(p,k,f),$$
    	 where $N(p,k,f)$ and $\Lambda(p,k,f)$ are defined by (\ref{Eq. definition of N(p,k,f)}) and (\ref{Eq. definition of Lambda(p,k,f)}) respectively.
    \end{lemma}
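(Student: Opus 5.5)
The plan is to split $M_p$ into a "sum-graph" matrix that includes loops, plus a diagonal correction. Define the $p\times p$ matrix $T_p$ by $T_p(x,y)=1_{R_p}(x+y)$ for all $x,y\in\mathbb{F}_p$, loops included. Since $G_p$ is simple, $M_p=T_p-E_p$, where $E_p$ is diagonal with $E_p(x,x)=1_{R_p}(2x)\in\{0,1\}$. Clearly $\|E_p\|\le1$. By the triangle inequality it therefore suffices to show
$$\left\|T_p-\frac{N(p,k,f)}{p}J_p\right\|\le \frac{n}{k}\left(1+(k-1)\sqrt{p}\right).$$
This is exactly where the summand $1$ in $\Lambda(p,k,f)$ comes from.

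Next I diagonalise $T_p$ with additive characters. Insert the Fourier expansion $1_{R_p}(x+y)=\sum_{\xi}\widehat{1_{R_p}}(\xi)e_p(\xi x)e_p(\xi y)$ to get
$$T_p=\sum_{\xi\in\mathbb{F}_p}p\,\widehat{1_{R_p}}(\xi)\,\mathbf{v}_\xi\mathbf{v}_\xi^{T},\qquad \mathbf{v}_\xi=\frac{1}{\sqrt p}\bigl(e_p(\xi x)\bigr)_{x\in\mathbb{F}_p}.$$
The $\xi=0$ term is $p\widehat{1_{R_p}}(0)\mathbf{v}_0\mathbf{v}_0^T=\frac{N(p,k,f)}{p}J_p$, because $\widehat{1_{R_p}}(0)=N(p,k,f)/p$. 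Hence
$$T_p-\frac{N(p,k,f)}{p}J_p=\sum_{\xi\neq0}p\,\widehat{1_{R_p}}(\xi)\mathbf{v}_\xi\mathbf{v}_\xi^T.$$

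The main step is bounding the operator norm of this sum. The matrix $\mathbf{v}_\xi\mathbf{v}_\xi^T$ uses the transpose, not the conjugate transpose. It maps $\overline{\mathbf{v}_\xi}=\mathbf{v}_{-\xi}$ to $\mathbf{v}_\xi$ and kills every $\mathbf{v}_{\eta}$ with $\eta\ne-\xi$, by orthonormality of $\{\mathbf{v}_\eta\}$. So in the orthonormal basis $\{\mathbf{v}_\eta\}$ the difference acts as a weighted permutation: it sends $\mathbf{v}_{-\xi}$ to $p\widehat{1_{R_p}}(\xi)\mathbf{v}_\xi$ for $\xi\ne0$. Write $\mathbf{u}=\sum_\eta c_\eta\mathbf{v}_\eta$. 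The image is $\sum_{\xi\ne0}p\widehat{1_{R_p}}(\xi)c_{-\xi}\mathbf{v}_\xi$, whose norm is at most
$$\max_{\xi\ne0}p|\widehat{1_{R_p}}(\xi)|\cdot|\mathbf{u}|.$$
(Alternatively, $T_p$ is real symmetric and the vectors $\mathbf{v}_\xi\pm\mathbf{v}_{-\xi}$ are eigenvectors with eigenvalues $\pm p|\widehat{1_{R_p}}(\xi)|$, using $\widehat{1_{R_p}}(-\xi)=\overline{\widehat{1_{R_p}}(\xi)}$.) Applying Lemma \ref{Lem. bounds for the Fourier coefficients} gives $p|\widehat{1_{R_p}}(\xi)|\le \frac{n}{k}(1+(k-1)\sqrt p)$.

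Combining this with the bound $\|E_p\|\le 1$ completes the argument. The only delicate point is the transpose-versus-adjoint bookkeeping in the norm step. Getting it right avoids losing a factor of $2$, which a naive argument pairing $\xi$ with $-\xi$ would introduce.
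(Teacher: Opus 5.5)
Your proposal is correct and is essentially the paper's proof: the same splitting of $M_p$ into the loop-including matrix with entries $1_{R_p}(x+y)$ (the paper's $W_p$) minus a $0/1$ diagonal of norm at most $1$, and the same fact that $W_p-\frac{N(p,k,f)}{p}J_p$ kills the constant vector and sends each character vector $\e_r$ to $p\,\widehat{1_{R_p}}(-x_r)\overline{\e_r}$. You derive that action from the rank-one Fourier expansion rather than by direct computation, and the norm bound $\max_{\xi\neq0}p|\widehat{1_{R_p}}(\xi)|$ then follows from Lemma \ref{Lem. bounds for the Fourier coefficients}. Only your parenthetical aside is slightly off: $\mathbf{v}_\xi\pm\mathbf{v}_{-\xi}$ are eigenvectors only when $\widehat{1_{R_p}}(\xi)$ is real, and in general you need $\omega\mathbf{v}_\xi\pm\mathbf{v}_{-\xi}$ with $\omega$ the phase of $\widehat{1_{R_p}}(\xi)$; the eigenvalues $\pm p|\widehat{1_{R_p}}(\xi)|$ are still right, and your main argument does not depend on this.
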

	
	\begin{proof} 
		Noting that $x_1,x_2,\cdots,x_p$ is a permutation of $0,1,2\cdots,p-1\in\mathbb{F}_p$, by the formula of the Vandermonde determinant, it is easy to see that 
		\begin{align*}
		  &\det \begin{bmatrix}
			e_p(x_1\cdot x_1)  &  e_p(x_2\cdot x_1)  &    \cdots   &  e_p(x_p\cdot x_1)\\
			e_p(x_1\cdot x_2)  &  e_p(x_2\cdot x_2)  &    \cdots   &  e_p(x_p\cdot x_2)\\
			\vdots                & \vdots                  & \ddots     & \vdots\\
			e_p(x_1\cdot x_p)  &  e_p(x_2\cdot x_p)  &    \cdots   &  e_p(x_p\cdot x_p)
		\end{bmatrix}\\
		=&\pm \det \begin{bmatrix}
			1                                 &  1                                    &    \cdots   &  1\\
			e_p(1\cdot 1)         &  e_p(2\cdot 1)           &    \cdots    &  e_p((p-1)\cdot 1)\\
			\vdots                      & \vdots                           & \ddots       & \vdots\\
			e_p(1\cdot (p-1))  &  e_p(2\cdot (p-1))    &    \cdots     &  e_p((p-1)\cdot (p-1))
		\end{bmatrix}\\
		=&\pm\prod_{1\le i<j\le p}\left(e_p(j)-e_p(i)\right)\\
		\neq& 0.
	\end{align*} 
		From this, the vectors $\e_1,\e_2,\cdots,\e_p\in\mathbb{C}^p$ are linearly independent over $\mathbb{C}$ and hence form a basis of $\mathbb{C}^p$, where 
		$$\e_r=\frac{1}{\sqrt{p}}(e_p(x_r\cdot x_1), e_p(x_r\cdot x_2),\cdots e_p(x_r\cdot x_p))^T$$
		for any $1\le r\le p$. In particular, recalling that $x_p=0$, we have 
		$$\e_p=\frac{1}{\sqrt{p}}(1,1,\cdots,1)^T.$$
		Moreover, for any $1\le i<j\le p$, one can verify that 
		\begin{align*}
			\sum_{r=1}^p \e_i(r,1)\overline{\e_j(r,1)}
&=\frac{1}{p}\sum_{r=1}^p e_p((x_i-x_j)\cdot x_r)\\
&=\frac{1}{p}\sum_{x\in\mathbb{F}_p}e_p(x)\\
&=0.
		\end{align*}
	Combining this with the above discussions, we see that 
	$$\e_1, \e_2, \cdots, \e_p$$
	form an orthonormal basis of $\mathbb{C}^p$. 
		
		Let $W_p$ be a $p\times p$ matrix with $W_p(i,j)=1_{R_p}(x_i+x_j)$ for any $1\le i,j\le p$. We first consider $W_p\e_p$. For any $1\le i\le p$, one can verify that 
		\begin{equation*}
			\sum_{j=1}^{p}W_p(i,j)\e_p(j,1)=\frac{1}{\sqrt{p}}\sum_{j=1}^{p}1_{R_p}(x_i+x_j)=\frac{1}{\sqrt{p}}\sum_{x\in\mathbb{F}_p}1_{R_p}(x)=\frac{\# R_p}{\sqrt{p}}=N(p,k,f)\e_p(i,1).
		\end{equation*}
		This implies that 
		\begin{equation}\label{Eq. actions of Wp on ep}
			W_p\e_p=N(p,k,f)\e_p.
		\end{equation}
		
		Now we focus on $W_p\e_r$ for $r\in(0,p)$. Given any integer $1\le i\le p$, by (\ref{Eq. the Fourier transform of 1R}) and noting that 
		$$e_p(-x_r\cdot x_i)=\overline{e_p(x_r\cdot x_i)},$$
		one can verify that 
		\begin{align*}
			\sum_{j=1}^pW_p(i,j)\e_r(j,1)
&=\sum_{j=1}^{p}1_{R_p}(x_i+x_j)e_p(x_r\cdot x_j)\\
&=\frac{1}{\sqrt{p}}\sum_{x\in\mathbb{F}_p}1_{R_p}(x)e_p(x_r\cdot (x-x_i))\\
&=\frac{e_p(-x_r\cdot x_i)}{\sqrt{p}}\sum_{x\in\mathbb{F}_p}1_{R_p(x)}e_p(x_r\cdot x)\\
&=p\cdot \widehat{1_{R_p}}(-x_r)\overline{\e_r(i,1)}.
		\end{align*}
From this we immediately obtain 
\begin{equation}\label{Eq. actiions of Wp on er}
	W_p\e_r=p\cdot \widehat{1_{R_p}}(-x_r)\overline{\e_r}.
\end{equation}		
	Recall that $J_p$ is a $p\times p$ matrix with all entries $1$. For any $1\le r\le p-1$, we have 
	\begin{equation*}
		\sum_{j=1}^pJ_p(i,j)\e_r(j,1)=\frac{1}{\sqrt{p}}\sum_{j=1}^p\e_r(j,1)=\frac{1}{\sqrt{p}}\sum_{j=1}^p e_p(x_r\cdot x_j)=\frac{1}{\sqrt{p}}\sum_{x\in\mathbb{F}_p}e_p(x)=0.
	\end{equation*}
	This implies $J_p\e_r=\0$ for any $1\le r\le p-1$.  Thus, given an arbitrary vector 
	$$\u=y_1\e_1+y_2\e_2+\cdots+y_{p-1}\e_{p-1}+y_p\e_p\in\mathbb{C}^p\setminus\{\0\},$$
	assembling (\ref{Eq. actions of Wp on ep}) and (\ref{Eq. actiions of Wp on er}) gives
	\begin{align*}
		\left(W_p-\frac{N(p,k,f)}{p}J_p\right)\u
&=\sum_{j=1}^py_j	\left(W_p-\frac{N(p,k,f)}{p}J_p\right)\e_j\\
&=\sum_{j=1}^{p-1}y_jp\widehat{1_{R_p}}(-x_j)\overline{\e_j}.
	\end{align*}
	Since $\e_1,\e_2,\cdots,\e_p$ is an orthonormal basis of $\mathbb{C}^p$, so is $\overline{\e_1}, \ldots, \overline{\e_p}$. Hence \eqref{Eq. actiions of Wp on er} and Lemma \ref{Lem. bounds for the Fourier coefficients} imply 
	\begin{align*}
		\left|\left(W_p-\frac{N(p,k,f)}{p}J_p\right)\u\right|
&=p\left|\sum_{j=1}^{p-1}y_j\widehat{1_{R_p}}(-x_j)\overline{\e_j}\right|\\
&=p\left(\sum_{j=1}^{p-1}|y_j|^2\left|\widehat{1_{R_p}}(-x_j)\right|^2\right)^{1/2}\\
&\le \frac{n}{k}\left(1+(k-1)\sqrt{p}\right)\cdot \left(\sum_{j=1}^{p-1}|y_j|^2\right)^{1/2}\\
&\le \frac{n}{k}\left(1+(k-1)\sqrt{p}\right)|\u|,
	\end{align*}
which yields 
	\begin{equation}\label{Eq. operator norm for Wp-J}
		\left\|  W_p-\frac{N(p,k,f)}{p}J_p \right\|\le \frac{n}{k}\left(1+(k-1)\sqrt{p}\right).
	\end{equation}
	
	On the other hand, let the diagonal matrix 
	$$D_p=\diag\left(1_{R_p}(2x_1), 1_{R_p}(2x_2), \cdots, 1_{R_p}(2x_p)\right).$$
	Since $1_{R_p}(2x_j)\in\{0,1\}$, for any vector $\u=(u_1,u_2,\cdots,u_p)^T\in\mathbb{C}^p$ we have 
	\begin{align*}
		\left|D_p \u\right|=\left(\sum_{j=1}^p |u_j|^21_{R_p}(2x_j)^2\right)^{1/2}\le \left(\sum_{j=1}^p |u_j|^2\right)^{1/2}\le |\u|.
	\end{align*}
	Thus, 
	\begin{equation}\label{Eq. operator norm for Dp}
		\left\| D_p  \right\|\le 1.
	\end{equation}
		As the adjacency matrix $M_p=W_p-D_p$, assembling (\ref{Eq. operator norm for Wp-J}) and (\ref{Eq. operator norm for Dp}) gives 
		\begin{align*}
	 \left\|  M_p-\frac{N(p,k,f)}{p}J_p  \right\|
&=\left\|  W_p-\frac{N(p,k,f)}{p}J_p-D_p \right\| \\
&\le \left\|  W_p-\frac{N(p,k,f)}{p}J_p\right\|+\left\|  D_p \right\| \\
&\le 1+\frac{n}{k}\left(1+(k-1)\sqrt{p}\right).
		\end{align*}
	
	In view of the above, we have completed the proof. 
	\end{proof}
	
	The expander mixing lemma mainly concerns the distribution of edges in a graph. Readers may refer to the survey papers \cite{Chung, HLW} and the book \cite[Corollary 9.2.5]{AS} for a comprehensive introduction to this topic.
	
	Recall that $\mathbb{F}_p=\{x_1,x_2,\cdots,x_p\}$, and $M_p$ is the adjacency matrix of the graph $G_p$. For any nonempty subsets $A,B\subseteq\mathbb{F}_p$, define 
	\begin{equation}\label{Eq. definition of eG}
			e_G(A,B)=\sum_{\substack{(a,b)\in A\times B\\ a\neq b}}1_{R_p}(a+b)=\sum_{\substack{1\le i\le p\\ x_i\in A}}\sum_{\substack{1\le j\le p\\ x_j\in B}}M_p(i,j).
	\end{equation}

	Recall that 
	$$	\Lambda(p,k,f)=1+\frac{\deg(f)}{k}(1+(k-1)\sqrt{p})$$
	is defined by (\ref{Eq. definition of Lambda(p,k,f)}). 	Now we establish the expander mixing lemma for graph $G_p$. 
	
	\begin{lemma}\label{Lem. The expander mixing lemma}
			Under the same conditions as in Lemma \ref{Lem. asymptotic formula for N(p,k,f)}, for any nonempty subsets $A,B\subseteq\mathbb{F}_p$, we have 
		 $$\left|e_G(A,B)-\frac{N(p,k,f)}{p}\#A\#B\right|\le\Lambda(p,k,f)\sqrt{\#A\#B}.$$
	\end{lemma}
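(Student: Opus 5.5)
The plan is to write $e_G(A,B)$ as a bilinear form in the adjacency matrix $M_p$ and then use the operator-norm bound of Lemma \ref{Lem. the operator norm of adjacency matrix}. Let $\1_A,\1_B\in\mathbb{C}^p$ be the indicator vectors of $A$ and $B$, with $j$-th coordinate $1_A(x_j)$ and $1_B(x_j)$ respectively. By the second expression in (\ref{Eq. definition of eG}),
$$e_G(A,B)=\1_A^T M_p \1_B .$$
This already accounts for the condition $a\neq b$, because $M_p$ has zero diagonal. That is exactly why the correction $D_p$ was subtracted from $W_p$ in the proof of Lemma \ref{Lem. the operator norm of adjacency matrix}. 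Since $J_p=\1\1^T$, where $\1$ is the all-ones vector, we also have
$$\1_A^TJ_p\1_B=\#A\cdot\#B.$$

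Subtracting the two identities gives
$$e_G(A,B)-\frac{N(p,k,f)}{p}\#A\#B=\1_A^T\Big(M_p-\frac{N(p,k,f)}{p}J_p\Big)\1_B .$$
I would bound the absolute value of the right side by the Cauchy--Schwarz inequality and the definition of the operator norm:
$$\Big|\1_A^T\Big(M_p-\frac{N(p,k,f)}{p}J_p\Big)\1_B\Big|\le|\1_A|\cdot\Big\|M_p-\frac{N(p,k,f)}{p}J_p\Big\|\cdot|\1_B|.$$
The vectors are real, so there is no issue with conjugation. Their norms are $|\1_A|=\sqrt{\#A}$ and $|\1_B|=\sqrt{\#B}$. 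Inserting the bound $\Lambda(p,k,f)$ from Lemma \ref{Lem. the operator norm of adjacency matrix} then yields the claimed inequality.

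There is no real obstacle here. The only point needing care is that the matrix identity for $e_G(A,B)$ must be checked against the definition of $e_G$, in particular that pairs with $a=b$ are correctly excluded. This holds because $M_p(i,i)=0$ while $J_p(i,i)=1$, and the density term is defined using $J_p$, so the statement holds exactly as written. The spectral work was already done in Lemma \ref{Lem. the operator norm of adjacency matrix}.
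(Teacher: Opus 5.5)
Your proposal is correct and is essentially the paper's own proof. You write $e_G(A,B)=\1_A^T M_p\1_B$ and $\#A\#B=\1_A^TJ_p\1_B$, then bound the bilinear form by Cauchy--Schwarz and the operator-norm estimate of Lemma \ref{Lem. the operator norm of adjacency matrix}. Your check that the zero diagonal of $M_p$ (coming from the subtraction of $D_p$) is exactly what enforces $a\neq b$ is a correct verification that the paper leaves implicit.
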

	
	\begin{proof}
		For any nonempty subset $U\subseteq\mathbb{F}_p$, let the $p\times 1$ matrix
		$$\1_{U}=\left(1_U(x_1),1_U(x_2),\cdots,1_U(x_p)\right)^T.$$
		Then one can verify that $\1_A^TM_p\1_B=e_G(A,B)$ and $\1_A^TJ_p\1_B =\#A\#B$. Thus, applying Lemma \ref{Lem. the operator norm of adjacency matrix} and the Cauchy–Schwarz inequality, we obtain 
		\begin{align*}
			\left|e_G(A,B)-\frac{N(p,k,f)}{p}\#A\#B\right|
&=\left|\1_A^T\left(M_p-\frac{N(p,k,f)}{p}J_p\right)\1_B\right|\\
&\le \left|\1_A\right|\cdot \left|\left(M_p-\frac{N(p,k,f)}{p}J_p\right)\1_B\right|\\
&\le  \left\|  M_p-\frac{N(p,k,f)}{p}J_p  \right\| \cdot \left|\1_A\right|\cdot \left|\1_B\right|\\
&\le \Lambda(p,k,f)\sqrt{\#A\#B}.
		\end{align*}
		
	In view of the above, we have completed the proof. 
	\end{proof}
	
	\subsection{The independence number and vertex connectivity}
	
	Let $A\subseteq\mathbb{F}_p$ be a nonempty subset. Let $H_A$ be the subgraph induced by $A$. In other words, the vertex set of $H_A$ is $A$ and any two distinct elements $a_1, a_2\in A$ are adjacent in $H_A$ if and only if they are adjacent in $G_p$.
	
	A nonempty subset $I\subseteq A$ is called an independent subset of $H_A$ if $\#I=1$ or any two different elements of $I$ are not adjacent in $H_A$. The independence number $\alpha(H_A)$ is defined by 
	$$\alpha(H_A)=\max\left\{\#I: I\ \text{is an independent subset of $H_A$}\right\}.$$
	
	The next result gives an upper bound for $\alpha(H_A)$.
	
	\begin{lemma}\label{Lem. upper bound for independence number}
		 	Under the same conditions as in Lemma \ref{Lem. asymptotic formula for N(p,k,f)}, for any subset $A\subseteq\mathbb{F}_p$, we have 
		 $$\alpha(H_A)\le   \frac{p\Lambda(p,k,f)}{N(p,k,f)}.$$
	\end{lemma}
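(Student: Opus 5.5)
Take an independent set $I\subseteq A$ of $H_A$ with $\#I=\alpha(H_A)$, and apply Lemma \ref{Lem. The expander mixing lemma} with $A=B=I$. Since $I$ is independent in $H_A$, and hence in $G_p$ (adjacency in $H_A$ is inherited from $G_p$), we have $e_G(I,I)=0$. The definition (\ref{Eq. definition of eG}) excludes pairs with $a=b$, so the diagonal contributes nothing. If $\#I=1$ the sum is empty, and $e_G(I,I)=0$ again.

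The expander mixing lemma then gives
$$\frac{N(p,k,f)}{p}(\#I)^2=\left|0-\frac{N(p,k,f)}{p}\#I\#I\right|\le \Lambda(p,k,f)\,\#I.$$
Dividing by $\#I>0$ and by $N(p,k,f)/p$ yields $\alpha(H_A)=\#I\le p\Lambda(p,k,f)/N(p,k,f)$.

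The division requires $N(p,k,f)>0$. Under the hypotheses of Lemma \ref{Lem. asymptotic formula for N(p,k,f)}, the explicit lower bound proved there, $N(p,k,f)\ge \frac{p-n}{k}-\frac{(k-1)(n-1)}{k}\sqrt p$, is positive once $p$ is large. If $N(p,k,f)=0$ for some small $p$, the right-hand side should be read as $+\infty$ and the bound holds trivially.

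I expect no real obstacle. The only point to check is that $e_G(I,I)$ counts ordered pairs of distinct elements. Edges with $a+b=2a$ would otherwise contribute, but $M_p$ has zero diagonal because the matrix $D_p$ was subtracted, so they do not. The case $A=\emptyset$ is vacuous.
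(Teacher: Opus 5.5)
Your proof is correct and is essentially the paper's own argument: take a maximum independent set $I$, apply Lemma \ref{Lem. The expander mixing lemma} with both sets equal to $I$, use $e_G(I,I)=0$, and divide by $\#I\cdot N(p,k,f)/p$. Your remark that this division needs $N(p,k,f)>0$ is a small point the paper leaves implicit.
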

	
	\begin{proof}
		Let $I\subseteq A$ be an independent subset of $A$ with $\#I=\alpha(H_A)$. Since $H_A$ is a subgraph induced by $A$, any two distinct elements of $I$ are not adjacent in $G_p$. Hence $e_G(I,I)=0$, where $e_G$ is defined by (\ref{Eq. definition of eG}). Applying Lemma \ref{Lem. The expander mixing lemma}, we obtain 
		$$\left|e_G(I,I)-\frac{N(p,k,f)}{p}(\#I)^2\right|\le \left(1+\frac{n}{k}\left(1+(k-1)\sqrt{p}\right)\right)\#I.$$
		This implies that 
		$$\alpha(H_A)=\#I\le  \frac{p}{N(p,k,f)}\left(1+\frac{n}{k}\left(1+(k-1)\sqrt{p}\right)\right)=\frac{p\Lambda(p,k,f)}{N(p,k,f)}.$$
		This completes the proof.
	\end{proof}
	
	The vertex connectivity $\kappa(H_A)$ is the smallest number of vertices whose deletion disconnects the graph $H_A$ or reduces it to a single vertex, with the convention that $\kappa(H_A)=0$ if $H_A$ is disconnected. 
	
	When $p\equiv 1\pmod {k}$ is sufficiently large, the next result establishes an inequality between $\kappa(H_A)$ and $\alpha(H_A)$. 
	
	\begin{lemma}\label{Lem. inequality for vertex connectivity}
		 	Under the same conditions as in Lemma \ref{Lem. asymptotic formula for N(p,k,f)}, if $p$ is sufficiently large, then for any subset $A\subseteq\mathbb{F}_p$ with 
		 	$$\#A\ge p+2-N(p,k,f)+\left\lfloor\frac{2p\Lambda(p,k,f)}{N(p,k,f)}\right\rfloor,$$
		 	we have 
		 $$\kappa(H_A)\ge\alpha(H_A).$$
	\end{lemma}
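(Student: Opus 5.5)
We argue by contradiction: we suppose $\kappa(H_A)<\alpha(H_A)$ and derive an impossibility. Write $N=N(p,k,f)$, $\Lambda=\Lambda(p,k,f)$, and $\beta=\lfloor p\Lambda/N\rfloor$. By Lemma \ref{Lem. upper bound for independence number}, $\alpha(H_A)\le p\Lambda/N$, so $\alpha(H_A)\le\beta$ because it is an integer. The hypothesis reads $\#A\ge p+2-N+\lfloor 2p\Lambda/N\rfloor\ge p+2-N+2\beta$. Since $N=p/k+O(\sqrt p)$ while $\Lambda\asymp\sqrt p$, we have $p\Lambda/N\asymp\sqrt p$. Hence $\#A\ge(1-1/k)p+O(\sqrt p)$, which for $p$ large is much larger than $\alpha(H_A)+1$. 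In particular $H_A$ is not complete-like in a degenerate way, and deleting fewer than $\alpha(H_A)$ vertices cannot reduce it to a single vertex.

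So $\kappa(H_A)<\alpha(H_A)$ means there is a set $S\subseteq A$ with $\#S=\kappa(H_A)\le\alpha(H_A)-1\le\beta-1$ such that $H_A- S$ is disconnected. We then split $A\setminus S$ into two nonempty parts $X$ and $Y$ with no edges of $G_p$ between them; take $X$ to be one connected component and $Y$ the rest. These satisfy
\[
\#X+\#Y=\#A-\#S\ge p+3-N+\beta .
\]
The key point is the degree structure of $G_p$. A vertex $x$ is adjacent to $y\ne x$ exactly when $y\in R_p-x$, so $\deg_{G_p}(x)\ge N-1$. Pick any $x\in X$. All its $G_p$-neighbours inside $A$ lie in $X\cup S$. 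Its neighbours in all of $\mathbb F_p$ number at least $N-1$, and at most $p-\#A$ of them lie outside $A$. Therefore
\[
\#X-1+\#S\ge N-1-(p-\#A),
\]
which gives
\[
\#X\ge N-p+\#A-\#S\ge 2+2\beta-(\beta-1)=\beta+3 .
\]
The same argument gives $\#Y\ge\beta+3$.

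Now we apply Lemma \ref{Lem. The expander mixing lemma} to $X$ and $Y$. Since $e_G(X,Y)=0$, it yields
\[
\frac{N}{p}\#X\#Y\le\Lambda\sqrt{\#X\#Y},
\qquad\text{that is,}\qquad
\sqrt{\#X\#Y}\le\frac{p\Lambda}{N}.
\]
On the other hand, $\#X,\#Y\ge\beta+3>p\Lambda/N$, so $\sqrt{\#X\#Y}>p\Lambda/N$. This is a contradiction, and the lemma follows.

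The main obstacle is the bookkeeping in the degree step. The degree bound $\deg_{G_p}(x)\ge N-1$ must account for the possible loop case $2x\in R_p$, which is why the $-1$ appears. We also need the count of non-neighbours outside $A$ to be sharp enough that the "$+2$" and the floor $\lfloor 2p\Lambda/N\rfloor$ in the threshold produce $\#X,\#Y>p\Lambda/N$ strictly. If the integer rounding turns out too tight, the fallback is to use $\#X+\#Y$ large together with the lower bound on the smaller part: $\#X\#Y\ge\min(\#X,\#Y)\cdot(\#X+\#Y)/2$, and the sum is of order $p$, which gives ample slack for $p$ sufficiently large. This is where the "sufficiently large" hypothesis is used, together with $N\asymp p$ from Lemma \ref{Lem. asymptotic formula for N(p,k,f)}.
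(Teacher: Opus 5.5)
Your proof is correct and follows essentially the same route as the paper. You use the degree count $\deg_{H_A}(x)\ge N-1-(p-\#A)$ to force both sides of any separation $A\setminus S=X\sqcup Y$ to have size exceeding $p\Lambda/N$, and then apply the expander mixing lemma with $e_G(X,Y)=0$ to get the contradiction. The differences are cosmetic: you bound an arbitrary component and its complement directly via the integer $\beta=\lfloor p\Lambda/N\rfloor$, whereas the paper bounds the minimum degree, takes the smallest component and bounds the rest by minimality; your closing fallback remark is unnecessary, since your main bookkeeping already gives $\#X,\#Y\ge\beta+3>p\Lambda/N$.
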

	
	\begin{proof}
		Let $\delta(H_A)$ be the minimum degree of $H_A$, i.e., 
		$$\delta(H_A)=\min\left\{\deg_{H_A}(x): x\in A\right\},$$
		where $\deg_{H_A}(x)$ denotes the degree of $x$ in $H_A$. For any $x\in A$, since 
		$$\#A\ge p+2-N(p,k,f)+\left\lfloor\frac{2p\Lambda(p,k,f)}{N(p,k,f)}\right\rfloor,$$ 
		one can verify that 
		\begin{align*}
			\deg_{H_A}(x)
&=\sum_{y\in A\setminus\{x\}}1_{R_p}(x+y)\\
&=\sum_{y\in\mathbb{F}_p}1_{R_p}(x+y)-1_{R_p}(2x)-\sum_{y\in\mathbb{F}_p\setminus A}1_{R_p}(x+y)\\
&\ge \sum_{y\in\mathbb{F}_p}1_{R_p}(y)-(1+p-\#A)\\
&=N(p,k,f)+\#A-(p+1)\\
&\ge N(p,k,f)+p+2-N(p,k,f)+\left\lfloor\frac{2p\Lambda(p,k,f)}{N(p,k,f)}\right\rfloor-(p+1)\\
&=\left\lfloor\frac{2p\Lambda(p,k,f)}{N(p,k,f)}\right\rfloor+1\\
&>\frac{2p\Lambda(p,k,f)}{N(p,k,f)}.
		\end{align*}
    Thus,  
    \begin{equation}\label{Eq. lower bound for delta(HA)}
    	\delta(H_A)>\frac{2p\Lambda(p,k,f)}{N(p,k,f)}.
    \end{equation}
 
    We next show that 
    $$\kappa(H_A)\ge \alpha(H_A).$$
	Suppose, to the contrary, that $\kappa(H_A)<\alpha(H_A)$. From this and Lemma \ref{Lem. upper bound for independence number}, there exists a subset $S$ with 
	\begin{equation}\label{Eq. upper bound for S}
		\#S<\alpha(H_A)\le \frac{p\Lambda(p,k,f)}{N(p,k,f)}
	\end{equation}
	such that $H_A-S$ is disconnected, where $H_A-S$ is the subgraph induced by $A\setminus S$. Thus, for any $x\in A\setminus S$, by (\ref{Eq. lower bound for delta(HA)}), (\ref{Eq. upper bound for S}) and noting that $N(p,k,f)\le p$, we obtain 
	\begin{align*}
	\deg_{H_A-S}(x)\ge \deg_{H_A}(x)-\#S
&\ge \delta(H_A)-\frac{p\Lambda(p,k,f)}{N(p,k,f)}\\
&>\frac{2p\Lambda(p,k,f)}{N(p,k,f)}-\frac{p\Lambda(p,k,f)}{N(p,k,f)}\\
&=\frac{p\Lambda(p,k,f)}{N(p,k,f)}\\
&\ge \Lambda(p,k,f)\\
&\ge \frac{k-1}{k}\sqrt{p}\\
&\ge \frac{1}{2}\sqrt{p}.
    \end{align*}
	This implies that $H_A-S$ has at least two vertices when $p$ is sufficiently large. Hence, $H_A-S$ has at least two connected components. 
	
	Let $C_0$ be a connected component of $H_A-S$ with vertex set $V(C_0)$ such that
	$$\#V(C_0)=\min\left\{\#V(C): C\ \text{is a connected component of $H_A-S$}\right\}.$$ 
	For any $x\in V(C_0)$, since $C_0$ is a connected component of $H_A-S$, each neighbor of $x$ in $H_A$ belongs to $(V(C_0)\setminus\{x\})\cup S$. Thus, 
	$$\delta(H_A)\le \deg_{H_A}(x)\le \#\left((V(C_0)\setminus\{x\})\cup S\right)=\#V(C_0)+\#S-1.$$
	Using (\ref{Eq. lower bound for delta(HA)}) and (\ref{Eq. upper bound for S}), we obtain 
	\begin{equation}\label{Eq. V(C0) is large}
		\#V(C_0)\ge 1+\delta(H_A)-\#S>1+\frac{2p\Lambda(p,k,f)}{N(p,k,f)}-\frac{p\Lambda(p,k,f)}{N(p,k,f)}>\frac{p\Lambda(p,k,f)}{N(p,k,f)}.
	\end{equation}

	Let $B=A\setminus(V(C_0)\cup S)$. Since $H_A-S$ has at least two connected components and $C_0$ is a connected component of $H_A-S$ with the minimum number of vertices, by (\ref{Eq. V(C0) is large}) we immediately obtain 
	$$\#B\ge \#V(C_0)>\frac{p\Lambda(p,k,f)}{N(p,k,f)}.$$
    Moreover, by the definition of $B$, there are no edges between $V(C_0)$ and $B$ in $H_A$. Since $H_A$ is the subgraph of $G_p$ induced by $A$, there are no edges between $V(C_0)$ and $B$ in $G_p$, i.e., 
    $$e_G(V(C_0),B)=0.$$
    Applying Lemma \ref{Lem. The expander mixing lemma} with this, we obtain 
    $$\frac{N(p,k,f)}{p}\#V(C_0)\#B\le \Lambda(p,k,f)\sqrt{\#V(C_0)\#B}.$$
    Since both $V(C_0)$ and $B$ are nonempty, the above inequality yields 
    $$\sqrt{\#V(C_0)\#B}\le \frac{p\Lambda(p,k,f)}{N(p,k,f)}.$$
    This contradicts the fact that $\#V(C_0)$ and $\#B$ are both strictly greater than $\frac{p\Lambda(p,k,f)}{N(p,k,f)}$. Thus, 
    $$\kappa(H_A)\ge \alpha(H_A).$$
    
    In view of the above, we have completed the proof. 		
	\end{proof}
	
	\subsection{Final proof}
	
	In 1979, Chv\'atal and Erd\H{o}s \cite{CE} obtained the following elegant result, which will be used in the proof of our theorem.
	
	\begin{theorem}[Chv\'atal and Erd\H{o}s]\label{Thm. CE}
		Let $G$ be a graph with at least three vertices. If the independence number $\alpha(G)$ and the vertex connectivity $\kappa(G)$ satisfy $\kappa(G)\ge\alpha(G)$, then $G$ has a Hamilton cycle. 
	\end{theorem}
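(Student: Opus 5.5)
The statement immediately above is the classical theorem of Chv\'atal and Erd\H{o}s \cite{CE}, which the paper cites rather than proves. If I had to supply a proof, I would use the standard longest-cycle argument, as follows.

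\textbf{Trivial and setup cases.} Let $n\ge3$ be the number of vertices of $G$. If $\alpha(G)=1$, then $G$ is complete, and a complete graph on $n\ge 3$ vertices is Hamiltonian. So assume $\alpha(G)\ge2$; then $\kappa:=\kappa(G)\ge 2$. Hence $G$ is $2$-connected, has minimum degree at least $\kappa$, and contains a cycle. Let $C$ be a longest cycle of $G$. I would first note that $\#V(C)\ge\kappa+1$. Take a longest path $P$ with endpoint $u$. Every neighbour of $u$ lies on $P$, and $u$ has at least $\kappa$ of them. Closing $P$ at the neighbour of $u$ farthest along $P$ gives a cycle of length at least $\kappa+1$.

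\textbf{Independent set outside a longest cycle.} Suppose, for contradiction, that some vertex $v$ lies off $C$. Since $G$ is $\kappa$-connected and $\#V(C)\ge\kappa$, the fan lemma (a consequence of Menger's theorem) gives $\kappa$ paths $P_1,\dots,P_\kappa$ from $v$ to $C$ with these properties: they are pairwise disjoint except at $v$, they have distinct endpoints $x_1,\dots,x_\kappa\in V(C)$, and each meets $C$ only at its endpoint. Fix an orientation of $C$ and let $x_i^+$ be the successor of $x_i$. I claim that
$$\{v,x_1^+,\dots,x_\kappa^+\}$$
is an independent set of size $\kappa+1$. This contradicts $\kappa\ge\alpha(G)$, so no such $v$ exists and $C$ is a Hamilton cycle.

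\textbf{Verifying the claim.} Each step exhibits a cycle longer than $C$.
\begin{enumerate}
\item \emph{The $x_i^+$ are distinct and lie outside $\{x_1,\dots,x_\kappa\}$.} Suppose $x_i^+=x_j$. Replace the edge $x_ix_j$ of $C$ by the route $x_i\xrightarrow{P_i}v\xrightarrow{P_j}x_j$. This produces a longer cycle.
\item \emph{The vertex $v$ is not adjacent to any $x_i^+$.} Suppose $vx_i^+$ is an edge. Replace the edge $x_ix_i^+$ of $C$ by $P_i$ followed by the edge $vx_i^+$. This again gives a longer cycle.
\item \emph{No two $x_i^+,x_j^+$ are adjacent.} Suppose $x_i^+x_j^+$ is an edge. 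Consider the cycle
$$x_i\xrightarrow{P_i}v\xrightarrow{P_j}x_j\xrightarrow{C^{-}}x_i^+\to x_j^+\xrightarrow{C^{+}}x_i .$$
It uses every vertex of $C$, and it also contains $v$, which is not on $C$.
\end{enumerate}

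\textbf{Main obstacle.} The only delicate point is the careful use of the fan lemma. The paths must be internally disjoint from $C$, which is arranged by truncating each path at its first vertex on $C$. We also need $\#V(C)\ge\kappa$ to obtain $\kappa$ distinct endpoints, which is why I establish the bound $\#V(C)\ge\kappa+1$ first. Beyond that, the rerouting checks in the three steps are routine.
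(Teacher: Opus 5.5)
Your proof is correct. The paper gives no proof of this theorem; it simply quotes it from Chv\'atal and Erd\H{o}s. What you have written is essentially their original argument: take a longest cycle $C$, take a $\kappa$-fan from a vertex $v\notin C$ to $C$, and use rerouting to show that $\{v,x_1^+,\dots,x_\kappa^+\}$ is an independent set of size $\kappa+1$. One point you leave implicit, that the $x_i^+$ are pairwise distinct, is immediate because the successor map on $C$ is injective.
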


	Now we are in a position to finish the proof of Theorem \ref{Thm. A}.
	
	{\noindent\bfseries Proof of Theorem \ref{Thm. A}}. Since $a_n\Delta(f)\neq0$, there are only finitely many primes dividing $a_n\Delta(f)$. Let $p$ be sufficiently large such that the hypotheses concerning the reduction of $f$ modulo $p$ that are required in Lemmas \ref{Lem. asymptotic formula for N(p,k,f)}--\ref{Lem. inequality for vertex connectivity} are satisfied, and $p\nmid a_n\Delta(f)$.
	
	By Lemma \ref{Lem. inequality for vertex connectivity} we have $\kappa(H_A)\ge\alpha(H_A)$. Thus, applying Theorem \ref{Thm. CE}, there exists a Hamilton cycle 
	$$a_1,a_2,\cdots,a_{\#A}, a_1$$
	in $H_A$. In other words, we have 
	$$f(a_i+a_{i+1})\in D_k(\mathbb{F}_p)$$
	for any $1\le i \le \#A$, where $a_{\#A+1}=a_1$.
	
	In view of the above, we have completed the proof. \qed 
	
	\section{Proof of Theorem \ref{Thm. B}}
	\setcounter{lemma}{0}
	\setcounter{theorem}{0}
	\setcounter{equation}{0}
	\setcounter{conjecture}{0}
	\setcounter{remark}{0}
	\setcounter{corollary}{0}
	
	Let notations be as above. Now we state the proof of our second theorem.
	
	{\noindent\bfseries Proof of Theorem \ref{Thm. B}}. By Theorem \ref{Thm. A} we immediately obtain 
	$$c_{\min}(p,k,f)\le  p+2-N(p,k,f)+\left\lfloor\frac{2p\Lambda(p,k,f)}{N(p,k,f)}\right\rfloor.$$
	
	Next we focus on the lower bound for $c_{\min}(p,k,f)$. It follows from Lemma \ref{Lem. asymptotic formula for N(p,k,f)} that 
	$$N(p,k,f)=\#R_p=\#\left\{x\in\mathbb{F}_p: f(x)\in D_k(\mathbb{F}_p)\right\}=\frac{p}{k}+O_{k,f}(\sqrt{p}),$$
	where $R_p$ is defined by (\ref{Eq. definition of the set Rp}). Hence $R_p\neq\emptyset$ for every sufficiently large $p$ with $p\equiv 1\pmod{k}$. Since $p>2$, we have $1/2\in\mathbb{F}_p$. Fix an element $r_0\in R_p$ and let $x_0=r_0/2$. Then 
	\begin{align*}
		\deg_{G_p}(x_0)
&=\sum_{\substack{y\in\mathbb{F}_p\\ y\neq x_0}}1_{R_p}(x_0+y)\\
&=\sum_{y\in\mathbb{F}_p}1_{R_p}(x_0+y)-1_{R_p}(2x_0)\\
&=\sum_{y\in\mathbb{F}_p}1_{R_p}(y)-1\\
&=N(p,k,f)-1.
	\end{align*}
	Let the subset
	$$A_0=\{x_0\}\cup\left\{y\in\mathbb{F}_p: 1_{R_p}(x_0+y)=0\right\}.$$
	Then, by Lemma \ref{Lem. asymptotic formula for N(p,k,f)} one can verify that 
	\begin{equation}\label{Eq. number of elements in A0}
		\#A_0=p-\deg_{G_p}(x_0)=p+1-N(p,k,f)=\left(1-\frac{1}{k}\right)p+O_{k,f}(\sqrt{p}).
	\end{equation}
	Since $k\ge2$, we have $\#A_0\ge 3$ when $p\equiv 1\pmod{k}$ is sufficiently large. 
	
	Let  $H_{A_0}$ be the subgraph induced by $A_0$. Note that $x_0$ is an isolated vertex in $H_{A_0}$, i.e., $\deg_{H_{A_0}}(x_0)=0$. Hence the graph $H_{A_0}$ has no Hamilton cycles. In other words, there is no permutation of $A_0$ satisfying the required conditions. Thus, by (\ref{Eq. number of elements in A0}) we have 
	\begin{equation}\label{Eq. inequality in the proof of Thm. B}
		p+2-N(p,k,f)\le c_{\min}(p,k,f)\le  p+2-N(p,k,f)+\left\lfloor\frac{2p\Lambda(p,k,f)}{N(p,k,f)}\right\rfloor.
	\end{equation}
	Noting that $N(p,k,f)\asymp_{k,f}p$ by Lemma \ref{Lem. asymptotic formula for N(p,k,f)} and that $\Lambda(p,k,f)\asymp_{k,f}p^{1/2}$, we have 
	$$\left\lfloor\frac{2p\Lambda(p,k,f)}{N(p,k,f)}\right\rfloor\asymp \frac{2p\Lambda(p,k,f)}{N(p,k,f)}\asymp_{k,f}p^{1/2}.$$
	From this and Lemma \ref{Lem. asymptotic formula for N(p,k,f)}, the inequality (\ref{Eq. inequality in the proof of Thm. B}) implies that 
	$$\left(1-\frac{1}{k}\right)p+O_{k,f}(\sqrt{p})\le c_{\min}(p,k,f)\le \left(1-\frac{1}{k}\right)p+O_{k,f}(\sqrt{p}).$$
	Hence, 
	$$c_{\min}(p,k,f)= \left(1-\frac{1}{k}\right)p+O_{k,f}(\sqrt{p}).$$
	
	In view of the above, we have completed the proof. \qed

\end{document}